\DeclareMathOperator{\Vol}{Vol}
\DeclareMathOperator{\Erf}{Erf}
\DeclareMathOperator{\eq}{eq}
\DeclareMathOperator{\supp}{supp}
\renewcommand{\Re}{\text{Re}}
\renewcommand{\Im}{\text{Im}}
\newcommand{\Szego}{Szeg\"o }
\newcommand{\szego}{Szeg\"o }
\newcommand{\Kahler}{K\"ahler }
\newcommand{\kahler}{K\"ahler }
\newcommand{\Sjostrand}{Sj\"ostrand }
\newcommand{\C}{\mathbb{C}}
\renewcommand{\H}{\mathbb{H}}
\newcommand{\R}{\mathbb{R}}
\newcommand{\Z}{\mathbb{Z}}
\newcommand{\F}{\mathbb{F}}
\newcommand{\T}{\mathbf{T}}
\newcommand{\acal}{\mathcal{A}}
\newcommand{\ccal}{\mathcal{C}}
\newcommand{\fcal}{\mathcal{F}}
\newcommand{\hcal}{\mathcal{H}}
\newcommand{\lcal}{\mathcal{L}}
\newcommand{\scal}{\mathcal{S}}
\newcommand{\tcal}{\mathcal{T}}
\newcommand{\wh}{\widehat}
\newcommand{\wt}{\widetilde}
\newcommand{\wb}{\overline}
\renewcommand{\rm}{\text}
\newcommand{\bma}{\begin{bmatrix}}
\newcommand{\ema}{\end{bmatrix}}
\newcommand{\baa}{\begin{align*}}
\newcommand{\eaa}{\end{align*}}
\newcommand{\bea}{\begin{eqnarray*} }
\newcommand{\eea}{\end{eqnarray*} }
\newcommand{\bee}{\begin{eqnarray} }
\newcommand{\eee}{\end{eqnarray} }
\newcommand{\be}{\begin{equation} }
\newcommand{\ee}{\end{equation} }
\newcommand{\bp}{\begin{prop}}
\newcommand{\ep}{\end{prop}}
\newcommand{\bt}{\begin{theorem}}
\newcommand{\et}{\end{theorem}}
\newcommand{\bpf}{\begin{proof}}
\newcommand{\epf}{\end{proof}}
\newcommand{\bl}{\begin{lem}}
\newcommand{\el}{\end{lem}}
\newcommand{\bc}{\begin{cor}}
\newcommand{\ec}{\end{cor}}
\newcommand{\bd}{\begin{defin}}
\newcommand{\ed}{\end{defin}}
\newcommand{\bcs}{\begin{cases}}
\newcommand{\ecs}{\end{cases}}
\newcommand{\bex}{\begin{example}}
\newcommand{\eex}{\end{example}}
\newcommand{\brem}{\begin{rem}}
\newcommand{\erem}{\end{rem}}
\renewcommand{\ss}{\subsection}
\newcommand{\isoto}{\xrightarrow{\sim}}
\newcommand{\pa}{\partial}
\newcommand{\ot}{\otimes}
\newcommand{\la}{\langle}
\newcommand{\ra}{\rangle}
\newcommand{\half}{\frac{1}{2}}
\renewcommand{\d}{\partial}
\newcommand{\dbar}{\bar\partial}
\newcommand{\ddbar}{\partial\dbar}
\newcommand{\RM}{\backslash}
\newcommand{\om}{\omega}
\newcommand{\hPi}{\h \Pi}
\newcommand{\h}{\hat} 
\newcommand{\kk}{\left( \frac{k}{2\pi} \right)}
\newtheorem*{maintheo}{{\sc {\bf Main Theorem}}}
\newtheorem{theointro}{{\sc {\bf Theorem}}}
\newtheorem{theo}{{\sc Theorem}}[section]
\newtheorem{cor}[theo]{{\sc Corollary}}
\newtheorem{defin}[theo]{{\sc Definition}}
\newtheorem{rem}[theo]{{\sc Remark}}
\newtheorem{example}[theo]{{\sc Example}}
\newtheorem{lem}[theo]{{\sc Lemma}}
\newtheorem{prop}[theo]{{\sc Proposition}}
\title{Central Limit theorem for spectral Partial  Bergman kernels }
\author{Steve Zelditch and Peng Zhou}
\address{Department of Mathematics, Northwestern  University, Evanston, IL 60208, USA}
\email{zelditch@math.northwestern.edu, \,pzhou.math@gmail.com}
\thanks{Research partially supported by NSF grant and DMS-1541126
and by the Stefan Bergman trust  .}
\begin{document}
\maketitle

\begin{abstract} Partial Bergman kernels $\Pi_{k, E}$ are kernels of orthogonal projections onto  subspaces $\scal_k \subset H^0(M, L^k)$ of holomorphic 
sections of the $k$th power of an ample line bundle over a \kahler manifold $(M, \omega)$. The subspaces of this article are spectral subspaces
$\{\hat{H}_k \leq E\}$ of the Toeplitz quantization $\hat{H}_k$  of a smooth Hamiltonian
$H: M \to \R$. It is shown that the relative partial density of states $\frac{\Pi_{k, E}(z)}{\Pi_k(z)} \to {\bf 1}_{\acal}$ where $\acal  = \{H < E\}$. Moreover it is shown that this partial density of states exhibits `Erf'-asymptotics along the interface $\partial \acal$, that is, the density profile asymptotically has a  Gaussian error function shape interpolating between the values $1,0$ of ${\bf 1}_{\acal}$. Such `erf'-asymptotics are a universal
edge effect. The different types of scaling asymptotics are reminiscent of the law of large numbers and central limit theorem.

\end{abstract}

This article is part of a series \cite{ZZ} devoted to partial Bergman kernels
on polarized \kahler manifolds $(L, h) \to (M^m, \omega, J)$, i.e. \kahler manifolds of (complex) dimension $m$
equipped with a Hermitian holomorphic line bundle  whose curvature form $F_\nabla$ for the Chern connection $\nabla$ satisfies $\omega = i F_\nabla$ 
. Partial Bergman
kernels \begin{equation} \label{PIS} \Pi_{k, \scal_k} : L^2(M, L^k) \to \scal_k \subset H^0(M, L^k) \end{equation}
are Schwarz kernel for orthogonal projections onto proper subspaces $\scal_k$ of the holomorphic sections of $L^k$. For certain sequences $\scal_k$ of subspaces, the partial density of states $ \Pi_{k, \scal_k}(z)$  has an asymptotic expansion as $k \to \infty$ which roughly gives the probability density
that a quantum state from $\scal_k$ is at the point $z$.  More concretely, in terms of  an orthonormal basis $\{s_i\}_{i=1}^{N_k}$ of $\scal_k$, the partial Bergman densities  defined by  \begin{equation} \label{DOS} \Pi_{k,\scal_k}(z) = \sum_{i=1}^{N_k} \|s_i(z)\|^2_{h^k}. \end{equation} When $\scal_k = H^0(M, L^k)$, $\Pi_{k, \scal_k} = \Pi_k: L^2(M, L^k) \to H^0(M, L^k)$ is the orthogonal (\szego
or Bergman) projection. 
We also call the ratio $\frac{\Pi_{k, \scal_k}(z)}{\Pi_k(z)}$ the partial density of states.

Corresponding to $\scal_k$ there is
an allowed region $\acal$ where the relative partial density of states $\Pi_{k, \scal_k}(z) / \Pi_k(z)$ is one, indicating
that the states in $\scal_k$ ``fill up'' $\acal$, and a forbidden region $\fcal$ where the relative density
of states is $O(k^{-\infty})$, indicating that the states in $\scal_k$ are almost zero in $\fcal$.
On the boundary $\ccal: =\partial \acal$ between the two regions there is a
shell of thickness $O(k^{-\half})$  in which the
density of states decays from $1$ to $0$. One of the prinicipal results of this article is that the $\sqrt{k}$-scaled  relative partial
density of states is asymptotically Gaussian along this interface, in a way reminiscent of the central limit theorem. This was proved in \cite{RS} for certain Hamiltonian holomorphic  $S^1$ actions, then in greater generality in
\cite{ZZ}. The results of this article show it is a universal property of partial Bergman kernels defined by $C^{\infty}$ Hamiltonians. 

Before stating our results, we explain how we define the subspaces
$\scal_k$.
 In \cite{ZZ} and in this
article, they are defined as spectral subspaces for the quantization of a smooth function $H: M \to \R$. 
By  the standard (Kostant) method of geometric quantization, one can quantize $H$ as the self-adjoint  zeroth order Toeplitz operator  \begin{equation} \label{TOEP} {H}_k:= \Pi_k (\frac{i}{k} \nabla_{\xi_H} +   H) \Pi_k: 
H^0(M, L^k) \to H^0(M, L^k) \end{equation} 
 acting on the space
$H^0(M, L^k)$ of holomorphic sections.  Here,  $\xi_H$ is the Hamiltonian vector field of $H$, $\nabla_{\xi_H}$ is the Chern covariant deriative on sections,   and $H$ acts by multiplication. Let $E$ be a regular value of $H$. We denote the  partial Bergman kernels for the corresponding spectral
subspaces by
\begin{equation} \label{PBK} \Pi_{k, E} : H^0(M, L^k)
\to \hcal_{k, E}, \end{equation}
where 
\begin{equation} \label{HEintro} \scal_k: = \hcal_{k, E}: = \bigoplus_{\mu_{k,j} < E}
V_{\mu_{k, j}}, \end{equation}
$\mu_{k,j} $ being the eigenvalues of ${H}_k$ and
\begin{equation} \label{EIGSP} V_{\mu_{k,j}}: = \{s \in H^0(M, L^k) : 
{H}_k s = \mu_{k, j} s\}. \end{equation} 
The associated allowed region $\acal$ is the classical counterpart to \eqref{HEintro}, and the forbidden region $\fcal$ and the interface $\ccal$ are 
\begin{equation} \label{acalDef} \acal: = \{z: H(z) < E\},  \quad \fcal = \{z: H(z) >E\}, \quad \ccal = \{z:H(z)=E\}. \end{equation}

For each $z \in \ccal$, let $\nu_z$ be unit normal vector to $\ccal$ pointing towards $\acal$. And let $\gamma_z(t)$ be the geodesic curve such that $\gamma_z(0) = z, \dot \gamma_z(0) = \nu_z$. For small enough $\delta>0$, the map 
\be \Phi: \ccal \times (-\delta, \delta) \to M, \quad (z, t) \mapsto  \gamma_z(t) \label{normalexp}\ee
is a diffeomorphism onto its image.

\begin{maintheo}\label{MAINTHEO}
Let $(L, h) \to (M, \omega, J)$ be a polarized \Kahler manifold. Let $H: M\to \R$ be a smooth function and $E$ a regular value of $H$. Let  $\scal_k \subset H^0(X, L^k)$ be defined as in  \eqref{HEintro}. Then we have the following asymptotics on partial Bergman densities $\Pi_{k, \scal_k}(z)$:
\[   \left( \frac{\Pi_{k, \scal_k}}{\Pi_{k}} \right)(z)  = \bcs
 1& \text{if } z \in \acal \\
0 & \text{if } z \in \fcal
\ecs \mod O(k^{-\infty}).  \]
For small enough $\delta>0$, let $\Phi:\ccal \times (-\delta, \delta) \to M$ be given by \eqref{normalexp}. Then for any $z \in \ccal$ and $t \in \R$, we have
\begin{equation} \label{REM}  \left( \frac{\Pi_{k, \scal_k}}{\Pi_{k}} \right)(\Phi(z,  t/\sqrt{k}))  = \Erf(2\sqrt{\pi} t) + O(k^{-1/2}),  \end{equation}
where $\Erf(x) = \int_{-\infty}^x e^{-x^2/2} \frac{dx}{\sqrt{2\pi}}$ is the Gaussian error function.  
\end{maintheo} 

\brem
We could also choose an interval $(E_1, E_2)$ with $E_i$ regular values of $H$\footnote{It does not matter whether the endpoints are included in the interval, since contribution from the eigenspaces $V_{k, \mu}$ with $\mu = E_i$ are of lower order than $k^m$.}, and define $\scal_k$ as the span of eigensections with eigenvalue within $(E_1, E_2)$. However the interval case can be deduced from the half-ray case $(-\infty, E)$ by taking difference of the corresponding partial Bergman kernel, hence we only consider allowed region of the type in \eqref{acalDef}. 
\erem
\bex
As a quick illustration, one can consider holomorphic function on $\C$ with weight $e^{-|z|^2}$. Fix any $\epsilon \geq 0$, then we may define the subspaces $\scal_k = \oplus_{j \leq \epsilon k} z^j$ which are spanned by sections vanishing to order at most $\epsilon k$ at $0$, or sections with eigenvalues $\mu < \epsilon$ for operator $ H_k = \frac{1}{ik} \pa_\theta $ quantizing $H=|z|^2$. The full and partial Bergman densities are
\[ \Pi_k(z) = \frac{k}{2\pi}, \quad \Pi_{k, \epsilon}(z) = \kk \sum_{j \leq \epsilon k} \frac{k^j}{j!} |z^j|^2 e^{-k |z|^2}, \]
As $k \to \infty$, we have
\[ \lim_{k \to \infty} k^{-1} \Pi_{k, \epsilon}(z) = \bcs 1 & |z|^2 < \epsilon \\0 & |z|^2 > \epsilon. \ecs \]
For the boundary behavior, one can consider sequence $z_k$, such that $ |z_k|^2 = \epsilon(1+k^{-1/2} u),$ 
\[\lim_{k \to \infty} k^{-1} \Pi_{k, \epsilon}(z_k) =  \Erf(u). \]
\eex

This example is often used to illustrate the notion of  `filling domains' in the IQH (integer Quantum Hall) effect. 
The following image of the density profile  is copied from \cite{W}, 

%
\begin{center} 
\includegraphics[scale=0.3]{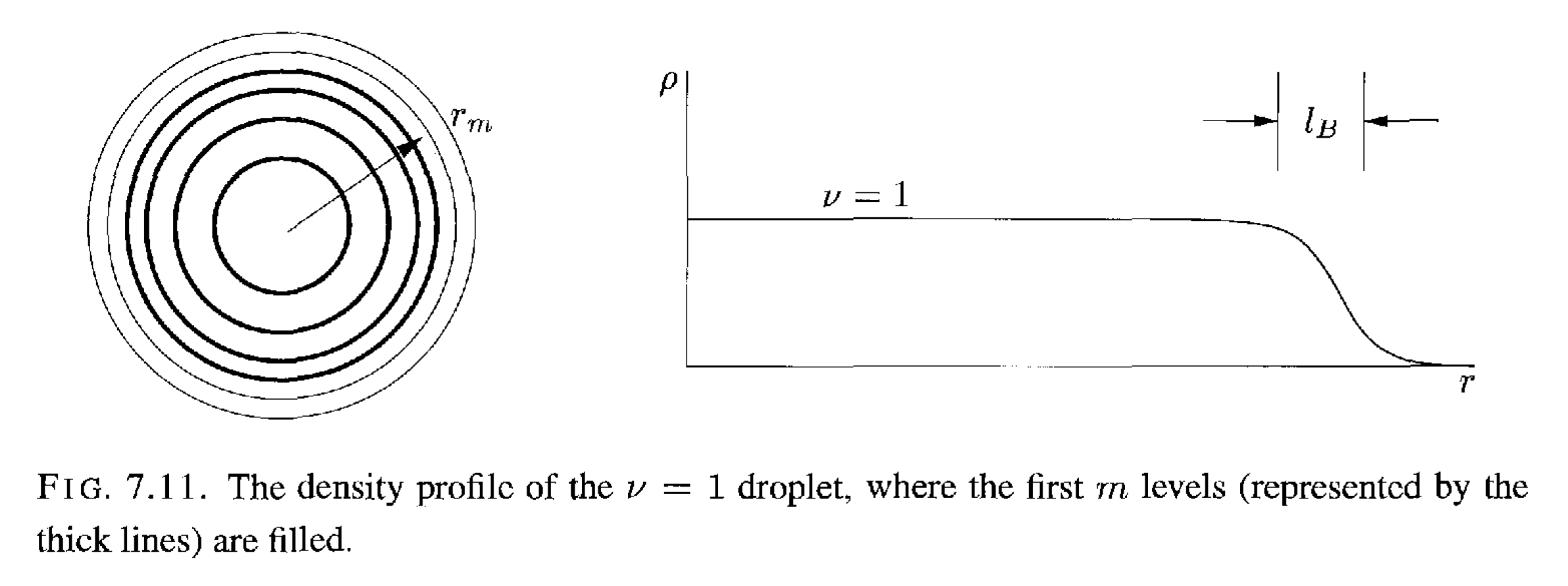}  
\end{center}
The graph is that of the Erf (Gaussian error function) or a closely related cousin.  The example is $S^1$ symmetric and therefore the simpler results of \cite{ZZ} apply. For more general domains $D$, even in dimension one, it is not obvious to to fill $D$ with LLL states.  The Main  Theorem answers the question when $D = \{H \leq E\}$ for some $H$. Other approaches are discussed in Section \ref{QHSECT}.

Erf asymptotics are now a standard feature of the IQH and are quite different from the density profile for the fractional QH effect, which is not given by a partial Bergman kernel (see   \cite{Wieg,CFTW} for the comparison of erf-asymptotics for the IQH and the unknown interface asymptotics for
the fractional QY effect.)

\subsection{Three families of measures at different scales}

The rationale for viewing the Erf asymptotics of scaled partial Bergman kernels along the interface $\ccal$ is explained by considering three different scalings of the spectral problem.

\begin{equation} \label{mukzdef} \left\{ \begin{array}{ll} 
(i) &  d\mu_k^z(x)\;\;
=  \sum_{j}
 \Pi_{k,j}(z) \delta_{\mu_{k,j}}(x), \\&\\

(ii) &   d\mu^{z, \half}_{k} (x)
=   \sum_{j}  
  \Pi_{k,j}(z)  \delta_{ \sqrt{k} (\mu_{k,j} - H(z))} (x),  \\  &\\
  
(iii) &   d\mu^{z, 1, \tau}_{k} (x)
=   \sum_{j}  
  \Pi_{k,j}( z)  \delta_{k (\mu_{k,j} - H(z)) + \sqrt{k} \tau} (x), 
  \end{array} \right.  \end{equation}
  where as usual,  $\delta_y$ is the Dirac point mass at $y \in \R$.
We use $\mu(x) = \int_{-\infty}^x d \mu(y)$ to denote the cumulative distribution function. 
  
 We view these scalings as corresponding to three scalings of the  convolution powers $\mu^{*k}$ of a probability measure $\mu$ supported on $[-1,1]$ (say). The third scaling (iii) corresponds to $\mu^{*k}$, which is supported on $[-k,k]$. The first scaling (i) corresponds to 
 the Law of Large Numbers, which rescales $\mu^{*k}$ back to $[-1,1]$. The second scaling (ii) corresponds to the CLT (central limit theorem) which rescales the measure to $[-\sqrt{k}, \sqrt{k}]$. 
  
Our main results give asymptotic formulae for integrals of test functions
and characteristic functions
against these measures. To obtain the remainder estimate \eqref{REM}, we need to apply semi-classical Tauberian theorems to 
$ \mu^{z, \half}_{k}$ and that forces us to find asymptotics for 
 $\mu^{z, 1,\tau}_{k}$.  

\subsection{Unrescaled bulk results on $d\mu_k^z$}

The first result is that the behavior of the partial density of states in the allowed region $\{z: H(z) < E\}$ is essentially the same as for
the full density of states, while it is rapidly decaying outside this region.

We begin with a simple and general result about partial Bergman kernels for smooth
metrics and Hamiltonians. 

\begin{theointro}\label{AF}  Let $\omega$ be a $C^{\infty}$ metric on $M$ and let $H \in C^{\infty}(M)$.  Fix a regular value $E$ of $H$ and let $\acal, \fcal, \ccal$ be given by \eqref{acalDef}. 
Then for any $f \in C^\infty(\R)$, we have
\be
\Pi_k(z)^{-1} \int_{-\infty}^E f(\lambda) d\mu_k^z(\lambda) \to \bcs
f(H(z)) &  \rm{if } z \in \acal\\
0 & \rm{if } z \in \fcal.\\
\ecs
\label{TOEPeq1}
\ee
In particular,  the  density of states of the partial Bergman kernel is given by the asymptotic
formula:
\be
\Pi_k(z)^{-1} \Pi_{k, E}(z) \sim 
\bcs
1  \mod O(k^{-\infty})  & \rm{if } z \in \acal\\
0 \mod O(k^{-\infty})  &  \rm{if } z \in \fcal.\\
\ecs 
\label{TOEPeq2}
\ee where the asymptotics are uniform on compact sets of $\acal$ or $\fcal.$
\end{theointro}

In effect, the leading order asymptotics says that
$d\mu_k^z \to \delta_{H(z)}$. 
This is a kind of Law of Large Numbers for the sequence $d\mu_k^z $.
The theorem does not specify the behavior of $\mu_k^z(-\infty, E) $
when $H(z) =  E$. The next result pertains to the edge behavior.

\subsection{$\sqrt{k}$-scaling results on $d \mu^{z, 1/2}_{k}$}

 The most interesting  behavior occurs in  $k^{-\half}$-tubes around the interface $\ccal$ between the allowed region $\acal$ and the forbidden region $\fcal$.
 For any $T>0$, the tube
of `radius' $T k^{-\half}$ around $\ccal = \{H = E\}$ is the flowout of $\ccal$ under the gradient flow  of $H$ 
\[ F^t := \exp(t \nabla H) : M \to M, \]
for $|t| < T k^{-1/2}$. 
Thus it suffices to study the partial density of states $\Pi_{k,E}(z_k)$ at points $z_k = F^{\beta/\sqrt{k}}(z_0)$ with $z_0 \in H^{-1}(E). $
The interface result for any smooth Hamiltonian is the same as if the Hamiltonian flow generate a holomorphic $S^1$-actions, and thus our result shows that it is
a universal scaling asymptotics around $\ccal$.

\begin{theointro} \label{RSCOR} 
 Let $\omega$ be a $C^{\infty}$ metric on $M$ and let $H \in C^{\infty}(M)$.  Fix a regular value $E$ of $H$ and let $\acal, \fcal, \ccal$ be given by \eqref{acalDef}. Let $F^t: M \to M$ denote the gradient flow of $H$ by time $t$. We have the following results: 
 \begin{enumerate}
 \item For any point $z \in \ccal$, any $\beta \in \R$, and any smooth function $f \in C^\infty(\R)$, there exists a complete asymptotic expansion,
\be \label{thm-2-1}  \sum_{j} f(\sqrt{k}(\mu_{k,j} - E))  \Pi_{k,j}( F^{\beta/\sqrt{k}} (z))   \simeq  \kk^m(  I_0  + k^{-\half}
I_{1} + \cdots) , \ee
in descending powers of $k^{\half}$, with the leading coefficient as
\[I_0(f, z, \beta) =   \int_{-\infty}^\infty f(x) e^{-\left(\frac{x  }{|\nabla H|(z)|} - \beta |\nabla H(z)| \right)^2} \frac { dx}{\sqrt{\pi}  |\nabla H (z)|}.\]

\item For any point $z \in \ccal$, and any $\alpha \in \R$, the cumulative distribution function $\mu^{z,1/2}_k(\alpha)=\int_{-\infty}^\alpha d \mu^{z,1/2}_k$ is given by
\be \mu^{z,1/2}_k(\alpha)  = \sum_{\mu_{k,j} < E + \frac{\alpha}{\sqrt{k}}} \Pi_{k,j}(z) =   \kk^m \Erf\left( \frac{ \sqrt{2} \alpha}{|\nabla H(z)|} \right) + O(k^{m-1/2}). \label{eqmuhalf} \ee

\item For any point $z \in \ccal$, and any $\beta \in \R$, the Bergman kernel density near the interface is given by
\begin{equation} \label{REMEST}  \Pi_{k,E}(F^{\beta/\sqrt{k}}(z))=  \sum_{\mu_{j,k} <  E}  \Pi_{k,j}( F^{\beta/\sqrt{k}} (z)) =  \kk^m\Erf \left( -\sqrt{2}\beta |\nabla H(z)| \right)+ O(k^{m-1/2}).\end{equation}

 \end{enumerate}

\end{theointro}

\begin{rem}
The  leading power $\kk^m$ is the same as in Theorem \ref{AF}, despite the
fact that we sum over a packet of eigenvalues of width  (and cardinality) $k^{-\half}$ times the width (and cardinality) in Theorem
\ref{AF}. This is because the summands $ \Pi_{k,j}(z)$ already
localize the sum to $\mu_{k,j}$ satisfying $|\mu_{k,j} - H(z)| < C k^{-\half}$. 
\end{rem}

%


\subsection{Energy level localization and $d \mu^{z, 1,\alpha}_{k}$}

To obtain the remainder estimate for the $\sqrt{k}$ rescaled measure $d\mu^{z,1/2}_k$ in \eqref{eqmuhalf} and \eqref{REMEST} , we apply the Tauberian theorem. Roughly speaking, one approximate $d\mu^{z,1/2}_k$ by convoluting the measure with a smooth function $W_h$ of width $h$, and the difference of the two is proportional to $h$. The smoothed measure $d \mu^{z,1/2}_k * W_h$ has a density function, the value of which can be estimated by an integral of the propagator $U_k(t, z, z)$ for $|t| \sim k^{-1} / (h k^{-1/2})$. 
Thus if we choose $h = k^{-1/2}$, and $W_h$ to have Fourier transform supported in $(-\epsilon, +\epsilon)/h$, we only need to evaluted $U_k(t, z,z)$ for $|t| < \epsilon$, where $\epsilon$ can be taken to be arbitrarily small.

\begin{theointro} \label{ELLSMOOTH} 
Let $E$ be a regular value of $H$ and $z \in H^{-1}(E)$. If $\epsilon$ is small enough, such that the Hamiltonian flow trajectory starting at $z$ does not loop back to $z$ for time $|t| < 2\pi \epsilon$, then for any Schwarz function 
 $f \in \scal(\R)$ with $\hat{f}$ supported in $(-\epsilon, \epsilon)$ and $\h{f}(0) = \int f(x) dx = 1$, and for any $\alpha \in \R$ we have 
\[
\int_\R f(x) d \mu^{z, 1, \alpha}_k(x) = \kk^{m-1/2} e^{- \frac{\alpha^2}{\|\xi_H(z)\|^2}} \frac{\sqrt{2}}{2\pi \|\xi_H(z)\|}(1+ O(k^{-1/2})).
\]
\end{theointro}


It is an interesting and well-known problem, in other settings, to find the asymptotics when
$\rm{supp}(\hat{f})$ is a general interval. 
When $z \in H^{-1}(E)$ they depend on whether or not $z$ is a periodic
point for $\exp t \xi_H$. In this article we only need to consider the singularity
at $t = 0$, or equivalently test functions for which the support of $\hat{f}$ is sufficiently close to $0$. In \cite{ZZ17}, the long time dynamics of the Hamiltonian flow  are used to give a complete asymptotic expansion for \eqref{RHOSQRTintrob} when $f \in C^{\infty}_c(\R)$ and a  two term asymptotics with remainder   when $f = {\bf 1}_{[E_1, E_2]}$.

\subsection{Sketch of the proofs} 
 
The theorems are proved by first  smoothing the sharp interval cutoff ${\bf 1}_{[E_{min}, E]}$  to a smooth cutoff  $f$  and obtaining asymptotics, 
and then applying a Tauberian argument. The jump discontinuity of  ${\bf 1}_{[E_{min}, E]}$   produces the universal
error function transition between the allowed and forbidden regions. This error function arises in classical approximation
arguments involving Bernstein polynomials (see \cite{ZZ} for background and references).
This is a standard method in proving
 sharp pointwise Weyl asymptotics by combining smoothed asymptotics with Tauberian theorems.

Given a function $f \in \scal(\R)$ 
(Schwartz space) one defines \begin{equation} \label{UNSCALED}
f( {H}_k) = \int_{\R} \hat{f}(\tau) e^{i \tau
{H}_k } \frac{d\tau}{2\pi} 
\end{equation} to be the operator  on $H^0(M, L^k)$ with the same eigensections
as ${H}_k$ and with eigenvalues $f(\mu_{k,j})$. 
Thus, if $s_{k,j}$ is an eigensection of ${H}_k$, then
\begin{equation} f({H}_k)  {s}_{k,j} =
f\left(\mu_{k,j}\right)  {s}_{k,j}
\end{equation} 

Let $E_{min}, E_{max}$ be such that $H(M) = [E_{min}, E_{max}]$. Given a regular value of $E$ of $H$, the subspace $\scal_k$ in \eqref{HEintro} is defined as the range of
$f({H}_k) $ where
 $f = {\bf 1}_{[E_{min}, E]}$ and the partial density of states is given by the metric contraction of the kernel,
\begin{equation} \Pi_{k,E}(z ) = f({H}_k)(z) =  \sum_{j: \mu_{k,j}\leq E}
\Pi_{k,j}(z). \end{equation}
For a smooth test function
$f$, $\Pi_{k,f}(z)$ is  
 the metric contraction of the Schwartz kernel of $f({H}_k)$  at $z = w$, is given by 
\begin{equation} \label{fPBK} \Pi_{k,f}(z)
= \sum_{j}
f\left(\mu_{k,j}\right) \Pi_{k,j}(z).
 \end{equation}

Note that $\Pi_k e^{i t {H}_k} \Pi_k$ is the exponential
of a bounded Toeplitz pseudo-differential operator $H_k$ and itself is a Toeplitz pseudo-differential operator. To obtain a dynamical
operator, i.e. one which quantizes a Hamiltonian flow,  one needs to exponentiate the first order Toeplitz operator $k {H}_k$. In that case,
 \begin{equation} \label{fofHFT}
f(k ({H}_k-E)) = \int_{\R} \hat{f}(\tau) e^{i k \tau
{H}_k } \frac{d\tau}{2\pi} =  \int_{\R} \hat{f}(\tau) U_k(\tau) \frac{d\tau}{2\pi},
\end{equation} 
where  \begin{equation} \label{UkDEF} U_k(\tau) = \exp( i \tau k {H}_k). \end{equation} is the unitary group on $H^0(M, L^k)$ generated by $k {H}_k$.

In \S \ref{TQD} it is shown that $U_k(t)$ is a semi-classical Toeplitz Fourier integral operator of a type defined
in \cite{Z97}. To construct a semi-classical  parametrix for $U_k(t)$ it is convenient to lift the Hamiltonian flow $g^t$
of $H$
to a contact flow $\hat{g}^t$ on the unit circle bundle $X_h = \{\zeta \in L^*: h(\zeta) = 1\}$ associated to the Hermitian metric $h$ on $L^*$.  That is, we lift sections
$s$ of $L^k$ to equivariant functions $\hat{s}: X_h \to \C$ transforming by $e^{i k \theta}$ under the
natural $S^1$ action on $L^*$. Holomorphic sections lift to CR holomorphic functions and the space
$H^0(M, L^k)$ lifts to the space $\hcal_k(X_h)$ of equivariant CR functions. The orthogonal projection
$\Pi_k$ onto $H^0(M, L^k)$ lifts to the orthogonal projection $\hat{\Pi}_{h^k} $ onto $\hcal_k(X_h)$.
In Proposition \ref{SC} it is shown that the lift  $\hat{U}_k(t)$ to $\hcal_k(X_h)$ has the form,
$\hat{\Pi}_{h^k}  (\hat{g}^t)^* \sigma_{k,t} \hat{\Pi}_{h^k}$ where $(\hat{g}^t)^*$ is the pullback of functions on $X_h$
by $\hat{g}^t$ and where $\sigma_{k,t}$ is a semi-classical symbol.

The main tool in the proof of Theorem  \ref{RSCOR} is to use the Boutet-de-Monvel-\Sjostrand parametrix
to study the integrals $\int_{\R} \hat{f}(t) U_k(\frac{t}{\sqrt{k}}, z,z)d t$. Since the relevant time interval
is `infinitesimal' (of the order $k^{-1/2}$) the result can be proved by linearizing the kernel $ U_k(\frac{t}{\sqrt{k}}, z,z)$.
The smoothed interface asymptotics of  thus amount to the asymptotics of  the dilated sums,
\begin{equation} \label{RHOSQRTintro}\sum_{j} f(\sqrt{k} (\mu_{k,j} - E)) \Pi_{k, j}(F^{\beta/\sqrt{k}} (z_0))  =   \int_{\R} \hat{f}( t) e^{- i E \sqrt{k} t} \hat{\Pi}_{h^k}  (\hat{g}^t)^* \sigma_{k,t} \hat{\Pi}_{h^k}(F^{\beta/\sqrt{k}} (z)) \frac{dt}{2\pi} \end{equation}
where   $z \in \partial \acal = H^{-1}(E)$  and where $\hat{f} \in L^1(\R)$, so that
the integral on the right side converges.
We  employ the Boutet-de-Monvel-Sjostrand parametrix to give an explicit formula for the right side of \eqref{RHOSQRTintro} modulo
small remainders. 

At this point, we are essentially dealing with $\frac{1}{\sqrt{k}}$ scaling asymptotics of \szego kernels,  as studied first in \cite{BSZ},
then in more detail in \cite{ShZ02, MaMa07, LuSh15} and for related dynamical purposes by Paoletti in \cite{P12, P14}. The scaling
asymptotics of the Bergman kernel infinitesimally off the diagonal  at $z_0$ are expressed in terms of the osculating Bargman-Fock-Heisenberg kernel
$\Pi_{BF}^{T_{z_0} M}$ of the (complex) tangent space at $z_0$. This tangent space is equipped with a complex structure $J_z$
and a Hermitian metric $H_z$ and therefore with a Bargmann Fock space $H^2(T_{z_0} M, \gamma_{J_{z_0}, H_{z_0}})$ of entire $J_{z_0}$-holomorphic
functions on $T_{z_0} M$ which are in $L^2$ with respect to the Gaussian weight determined by $J_z, H_z$. This is the linear model 
for  semi-classical Toeplitz calculations. 
Since we reduce the general calculation of scaling asymptotics to the linear ones, we present the calculations in the Bargmann-Fock model in detail first (see Sections \ref{BFmodel1}-\ref{BFmodel2}-\ref{BFmodel3}).
 We emphasize that the linear model is not only an example, but constitutes a fundamental part of the proofs.

To prove Theorem \ref{ELLSMOOTH}, we study the integrals
\begin{equation} \label{RHOSQRTintrob} \mu^{z,1, \tau}_{k}(f): =\sum_{j} f(k (\mu_{k,j} - H(z))+\sqrt{k} \tau) \Pi_{k, j}(z) =   \int_{\R} \hat{f}( t) e^{- i H(z) k  t + i \sqrt k \tau t} \hat{\Pi}_{h^k}  (\hat{g}^t)^* \sigma_{k,t} \hat{\Pi}_{h^k}(z) \frac{dt}{2\pi} \end{equation}
For purposes of this article, we only need the infinitesimal time behavior of the Hamilton flow. In \cite{ZZ17}  we use the long time asymptotics of the Hamilton flow to obtain a two term Weyl law.

After obtaining scaling asymptotics for smoothed partial densities of states, we employ  Tauberian theorems
with different scalings 
 to obtain  asymptotics with remainders for the sharp partial densities of states  or the
 measures \eqref{mukzdef}. Background on  Tauberian theorems is given  in \S \ref{TAUBERAPP} and \S \ref{TAUBER}.

\subsection{\label{S1COMP} Comparison to the $S^1$ case}

For purposes of comparison, we review the results of \cite{RS, ZZ}
in the case where the Hamiltonian generates an $S^1$ action by holomorphic maps.

In  \cite{RS, ZZ}, it is assumed that $(M, L, h)$ is invariant under a  Hamiltonian holomorphic $S^1$ action. The action naturally quantizes
or linearizes on the spaces $H^0(M, L^k)$, and  $\scal_k$ is defined in terms of the weights (eigenvalues) of the $S^1$ action on the spaces $H^0(M, L^k)$. The eigenvalues of the generator ${H}_k$ of the quantized
$S^1$ action are `lattice points' $\frac{j}{k}$ where $j \in \Z$. The partial
Bergman kernel $\Pi_{k,j}$ onto a single weight space $V_k(j) \subset H^0(M, L^k)$  already has asymptotics as $k \to \infty, \frac{j}{k} \to E$ and the asymptotics of the
partial Bergman kernels corresponding to intervals of weights is obtained
by integrating these `equivariant' Bergman kernels.

The  `equivariant Bergman kernels'
$\Pi_{k, j}: H^0(M, L^k) \to V_k(j)$  resemble transverse Gaussian
beams along the energy level $H^{-1}(\frac{j}{k})$, and  have complete asymptotic
expansions that can be summed over $j$ to give asymptotics of the density of states $\Pi_{h^k, [E_1, E_2]}(z,z)$ of partial Bergman kernels as $k \to \infty$. In the allowed region $H^{-1}([E_1, E_2]),$
$k^{-m} \Pi_{h^k, [E_1, E_2]}(z,z) \simeq k^{-m} \Pi_k(z,z) \simeq 1$
while in the complementary forbidden region the asymptotics are rapidly
decaying, and exponentially decaying when the metric $\omega$ is real 
analytic. 
The equivariant kernels additionally possess Gaussian scaling asymptotics along the
energy surface, and by summing in  $j$ one obtains   incomplete Gaussian asymptotics of the partial
Bergman kernels along the boundary  $H^{-1}\{E_1, E_2\}$ as in Ross-Singer \cite{RS}.

In the non-periodic case of this article, 
the Hamilton flow    $$g^t := \exp t \xi_H: M \to M, $$
generated by  $H: M \to \R$ with respect to $\omega$ is not holomorphic.  Here and below, 
 we use the notation $\exp t X$ for the flow of a real vector field $X$.
The  gradient flow is denoted by
\begin{equation} \label{gradflow} F^t: = \exp t \nabla H: M \to M. \end{equation} This change from a holomorphic Hamiltonian $S^1$ action to a Hamiltonian $\R$ action brings many new features into the asymptotics
of partial Bergman kernels. First, 
the gradient flow of $\nabla H$
no longer commutes with the Hamilton flow of $\xi_H$, so that one does
not have a global $\C^*$ action to work with.  As mentioned above,  the eigenvalues
generally have multiplicity one and the  eigenspace projections
\begin{equation} \Pi_{k, \mu_{k,j}}: H^0(M, L^k) \to V_{\mu_{k,j}} \end{equation} do not have individual asymptotics. One only obtains
asymptotics if one sums over a `packet' of $k^{m-1}$ eigenvalues in
a `smooth' way.  The asymptotics of this counting function are given
by the reasonably well-understood 
Weyl law for semi-classical Toeplitz operators.
When $H$ is real analytic,   exponential decay of density of states for $z \in \fcal$ still occurs but the rate has a different shape from that in the holomorphic case.  But as shown in this article, the  interface asymptotics of \cite{RS,ZZ} are universal, and continue to hold even
in the $C^{\infty}$ case.

 \begin{rem} The corresponding result for holomorphic Hamiltonian  $S^1$ actions (Theorem  0.4 of \cite{ZZ}) states the
 following. Let 
     $\omega$ be  a $C^{\infty}$ $\T$-invariant \kahler metric, 
and let   $H$ generates the holomorphic $\T$ action.
Fix $E \in H(M)$, and let $z = e^{\beta/\sqrt{k}} \cdot z_0$ for some $z_0 \in H^{-1}(E) \in \R$, 
Above, $e^{\beta/\sqrt{k}} $ denotes the imaginary time part of the $\C^*$ action generated by $H$. The  gradient flow \eqref{gradflow}  of this article is the same as the imaginary time Hamiltonian flow $e^{\beta/\sqrt{k}} $ in the $S^1$ case.
Moreover,  in the $S^1$ case, $H =  \half \partial_{\rho}  \phi, \;\; \pi \partial^2_{\rho} \phi = |\nabla H|^2 $.
The asymptotics also hold if $f = {\bf 1}_{[E_1, E_2]}$ is the characteristic
function of an interval and for  $f(x) = 1_{[0, \infty]}(x)$, then 
\[k^{-m} \sum_{j > k E}  \Pi_{k,j}( e^{\beta/\sqrt{k}} \cdot z_0) =  \int_{-\infty}^{\beta \sqrt{\pa_\rho^2\varphi(z_0)}} e^{-t^2/2} \frac{dt}{\sqrt{2\pi}} + O(k^{-1/2}).\]
Hence this agrees with the formula of Theorem \ref{RSCOR}.

\end{rem}

\subsection{\label{QHSECT} Other approaches and related results}

As mentioned above, this article is partly  motivated by the somewhat vague question, {\it How do you
fill a domain $D$ with quantum states in the LLL (lowest Landau level)?} 
Roughly speaking, if the states are denoted $\psi_j$, then we want
$\psi_j$ to be in the LLL (i.e. zero modes for the Landau Hamiltonian, and therefore holomorphic) and
we want $\frac{1}{N}\sum_{j = 1}^N |\psi_j(z)|^2 \simeq {\bf 1}_D$ to be approximately
equal to the characteristic function ${\bf 1}_D$ of $D$. There are a number
of inequivalent  ways  in which this question could be  formulated precisely.
The approach of this article is to represent $D = \{H \leq E\}$ for some
smooth Hamiltonian $H$ and to prove that the eigensections of $\hat{H}_k$
\eqref{TOEP} 
from the spectral subspace $\hcal_{k, E}$  \eqref{HEintro} asymptotically fill $D$. The Main Theorem and 
Theorem \ref{AF}  show that,  as $k \to \infty$,
$$k^{-m} \Pi_{h^k, [E_1, E_2]}(z,z) \to {\bf 1}_{H^{-1}[E_1, E_2]} $$
and thus ``fills the domain $H^{-1}[E_1, E_2]$ with lowest Landau levels''
without spilling outside the domain. 

Another approach is to to use  $\Pi_k {\bf 1}_D \Pi_k$,
a Toeplitz operator with discontinuous symbol $ {\bf 1}_D$. To leading order it agrees with $\Pi_{k, E}$ when $D = \{H \leq E\}$, but it is not a projection operator. Its eiegensections with eigenvalues close to $1$ should be the states which fill up $D$. It would be interesting to compare this operator to $\Pi_{k, E}$ more precisely.

  A third,  geometrically interesting,  way to define $\scal_k$  is to introduce  a smooth integral divisor $D$ in $X$, and let the subspace $\scal_k$ to be holomorphic sections in $L^k$ that vanishing at least to order $\lfloor \epsilon k \rfloor$ along $D$, where $\epsilon$ is a small enough positive number. The allowed region is given by 
\[ \acal = \{\phi_{\eq,\epsilon,D}(z)=0\}, \; \text{ where} \;\;
\phi_{\eq,\epsilon,D} (z)= \sup \{ \wt \phi(z) : \omega + i \ddbar \wt \phi \geq 0, \;  \wt \phi \leq 0, \; \nu(\wt \phi)_w \geq \epsilon \, \forall w \in D \} \]
and $\nu(\wt \phi)_w$ is the Lelong number at $w$ (See \cite{Ber1} Section 4). The boundary behavior of the partial Bergman kernels  for such $\scal_k$ is only known in the case where $D$ is fixed by a holomorphic $\C^*$-action on $M$, basically because in this case it coincides with the spectral theory
of the Hamiltonian generating the underlying $S^1$ atction \cite{RS,ZZ}.

\section{Background}

The background to this article is largely the same as in \cite{ZZ}, and we refer there for many details.  Here we give a lightning review to setup the notation. First we introduce co-circle bundle $X \subset L^*$ for a positive Hermitian line bundle $(L,h)$, so that holomorphic sections of $L^k$ for different $k$ can all be represented in the same space of CR-holomorphic functions on $X$, $\hcal(X) = \oplus_k \hcal_k(X)$. Then we define the \Szego projection kernel
and state the Boutet de Monvel-\Sjostrand parametrix. In the end, we give Bergman kernel for Bargmann-Fock model on $\C^n$. 

\ss{Positive Line bundle $(L,h)$ and the dual unit circle bundle $X$} \label{ss:circle}
Let $(M, \omega, J, g)$ be a \Kahler manifold, where $\omega$ is a $J$-invariant symplectic two-form and $g$ is a Riemannian metric determined by $\omega, J$ as $g(- ,-) = \om(-, J -)$. Let $(L, h)$ be a holomorphic Hermitian line bundle on $M$. Let  $e_L \in \Gamma(U, L)$ be an local frame over an open subset $U \subset M$, and the  Kahler potential $\varphi(z)$ over $U$ is defined by $e^{-\varphi(z)}=h(e_L, e_L)$. The local expression for the Chern connection $\nabla$ and Chern curvature $F_\nabla$ are $\nabla = \d + \dbar - \d \varphi \wedge (\cdot)$ and $F_\nabla = \ddbar \varphi$. We say $(M, \omega)$ is polarized by $(L,h)$ if the \Kahler form $\omega$ is
\[ \omega =  2\pi c_1(L) = i F_\nabla = i \ddbar \varphi = -\frac{1}{2} dd^c \varphi, \]
where $d^c = i(\d - \dbar)$ such that $\la d^c f, - \ra = \la df, J (-) \ra$. 

Let $(L^*, h^*)$ be the dual bundle to $L$ with the induced hermitian metric $h^*$, and $e_L^* \in \Gamma(U, L^*)$  the dual frame to $e_L$ with $\|e_L^*\| = 1/\|e_L\| = e^{\varphi(z)/2}$. The unit open disk bundle is $D=D(L^*, h^*) = \{p \in L^*, \|p\| < 1\} $ and the unit circle bundle is  $X=\pa D$. Let $\pi: X \to M$ be the projection, then there is a canonical circle action $r_\theta$ on $X$. 
Let $\rho$ be a smooth function defined in a neighborhood of $X$, such that $\rho > 0$ in $D$,  $\rho|_X=0$  and $d \rho|_X \neq 0$. In this paper, we fix a choice of $\rho$ as
\[ \rho(x) = - \log \|x\|_h^2 =  - 2\log | \lambda | - \log \|e_L^*\|^2 =  - 2\log | \lambda | - \varphi(z)  \]
where $x = \lambda e^*_L(z) \in L^*|_U, z \in U$ and $\lambda \in \C^*$. 
Then $X$ can be equipped with a contact one-form $\alpha$,  
\begin{equation} \label{alphadef} \alpha =- \Re(i\dbar\rho)|_X =   d \theta + \pi^* \Re( i \dbar \varphi(z))  =  d \theta   - \frac{1}{2}  d^c \varphi(z),  \quad \text{ and } d \alpha =  \pi^* \omega\end{equation} 
where $(z, \theta)$ is a local coordinate on $X$, given by 
\be (z, \theta) \mapsto e^{i \theta} \cdot e_L^*(z)/\|e_L^*(z)\| = e^{i \theta - \varphi(z)/2} e_L^*(z). \label{Xcor}\ee The Reeb vector field on $X$ is given by $R = \pa_\theta. $

%

{\bf Naming convention:} For points in the base space $M$, we use names as $z, w, \cdots$; for points in the circle bundle $X$, we use $\h z, \h w, \cdots$, such that under the projection $\pi: X \to M$, $\pi(\h z) = z, \pi(\h w) = w$, etc. In general, objects upstairs in the circle bundle $X$ with a corresponding object in $M$ are equipped with a hat ``$\wh{\;\;}$''. 

\ss{Holomorphic sections in $L^k$ and CR-holomorphic functions on $X$}
Since $(L,h)$ is a positive Hermitian line bundle,  $X$ is a strictly pseudoconvex CR manifold. The {\it  CR structure} on $X$ is defined as
follows:
 The kernel of $\alpha$ defines a horizontal hyperplane bundle $HX \subset
TX$, invariant under $J$ since $\ker \alpha = \ker d \rho \cap \ker d^c\rho$. Thus there is a splitting of the complexification of $HX$ as $HX_\C = HX \ot_\R \C =HX^{1,0} \oplus HX^{0,1}$ compatible with the splitting of $TM_\C=TM^{1,0} \oplus TM^{0,1}$.
We  define the almost-CR $\bar{\partial}_b$
operator by  $\bar{\partial}_b  = d|_{H^{0,1}}$. More concretely, if $z_1, \cdots, z_m$ are complex local coordinate on $M$, and $\dbar$ on $M$ is given by $\dbar = \sum_{j=1}^m d \bar z_j \ot \d_{\bar z_j}$, then the $\dbar_b$ operator on $X$ is given by $\dbar_b = \sum_{j=1}^m \pi^*d \bar z_j \ot \d^h_{\bar z_j}$, where $\d^h_{\bar z_j}$ is the horizontal lift of $\pa_{\bar z_j}$ from $TM^{0,1}$ to $HX^{0,1}$. Similarly one can define $\d_b$. A function $f: X \to \C$ is CR-holomorphic, if $\dbar_b f = 0$. 

A smooth section $s_k$ of $L^k$ determines a smooth function $\h s_k$ on $X$ by
\[ \h s_k(x) := \la x^{\ot k}, s_k\ra, \quad x \in X \subset L^*. \]
Furthermore $\h s_k$ is of degree $k$ under the canonical $S^1$ action $r_\theta$ on $X$, $\h s_k(r_\theta x) = e^{i k \theta} \h s_k(x)$. We denote the space of smooth section of degree $k$ by $C^\infty(X)_k$. If $s_k$ is holomorphic, then $\h s_k$ is CR-holomorphic.

We equip $X$ and $M$ with volume forms
\[ d \Vol_X =  \frac{\alpha}{2\pi}\wedge\frac{(d\alpha)^m}{m!}, \quad d \Vol_M = \frac{\omega^m}{m!},  \]
such that the push-forward measure of $X$ equals that of $M$. Then, given two smooth section $s_1, s_2$ of $L^k$, we may define the inner product
\[ \la s_1, s_2 \ra := \int_M h^k(s_1(z), s_2(z)) d \Vol_M(z). \]
Similarly, given two smooth functions $f_1, f_2$ on $X$, we may define
\[ \la f_1, f_2 \ra := \int_X f_1(x) \wb{f_2(x)} d \Vol_X(x). \]
Let $L^2(M, L^k)$ and $L^2_k(X)$ be the Hilbert spaces of $L^2$-integral sections.Then sending $s_k$ to $\h s_k$ is an isomorphism of Hilbert spaces: $L^2(M, L^k) \isoto L^2_k(X)$. Moreover,  let $\hcal^2_k(X) \subset L^2_k(X)$ be the subspace of CR-holomorphic function, then the isomorphism restricts to an isomorphism between the holomorphic sections in $L^k$ and the CR-holomorphic functions of degree $k$: $H^0(M, L^k) \isoto \hcal^2_k(X)$. 

%

\ss{\Szego Projection kernel on $X$}
On the circle bundle $X$ over $M$, we define the orthogonal projection from $L^2(X)$ to the CR-holomorphic subspace $\hcal^2 (X) = \h \oplus_{k \geq 0} \hcal^2_k(X)$, and degree-$k$ subspace $\hcal_k^2(X)$: 
\[ \h \Pi: L^2(X) \to \hcal^2(X), \quad \h \Pi_k: L^2(X) \to \hcal_k^2(X), \quad \hPi = \sum_{k \geq 0} \hPi_k. \]
The Schwarz kernels $\hPi_k(x,y)$ of $\hPi_k$ is called the degree-$k$ \Szego kernel, i.e. 
\[ (\hPi_k F)(x) = \int_X \hPi_k(x,y) F(y) d \Vol_X(y), \quad \forall F \in L^2(X). \]
If we have an orthonormal basis $\{\h s_{k,j}\}_j$ of $\hcal^2_k(X)$, then
$$ \hPi_k(x,y) = \sum_j \h s_{k,j}(x) \wb{ \h s_{k,j}(y)}.$$ Similarly, one can define the full \Szego kernel $\hPi(x,y)$. 
The degree-$k$ kernel can be extracted as the Fourier coefficient of $\hPi(x,y)$
\[ \hPi_k(x,y) = \frac{1}{2\pi} \int_0^{2\pi} \hPi(r_\theta x, y) e^{-i k \theta} d \theta. \]

We denote the value of the kernels on the diagonal as $\hPi(x) = \hPi(x,x)$ and $\hPi_k(x) = \hPi_k(x,x)$. Then the Bergman density on $M$ can be obtained from the \Szego kernel as $ \Pi_k(z) = \hPi_k(\h z)$. 

\ss{Boutet de Monvel-Sj\"ostrand parametrix for the \Szego kernel}

Near the diagonal in $X \times X$, there exists a parametrix due to  Boutet de Monvel-Sj\"ostrand 
\cite{BSj} for the \Szego kernel of the form,  
\begin{equation} \label{SZEGOPIintroa}  
\hat{\Pi}(x,y) =  \int_{\R^+} e^{\sigma \h \psi(x,y)} s(x, y ,\sigma) d \sigma  + \hat{R}(x,y). 
\end{equation} 
where $\h \psi(x,y)$ is the almost-CR-analytic extension of $\h \psi(x,x)=-\rho(x) = \log \|x\|^2$. 
In local coordinate, let $x =  e^{i \theta_x} \frac{e_L^*(z)}{\|e_L^*(z)\|}, y =   e^{i \theta_y} \frac{e_L^*(w)}{\|e_L^*(w)\|}$,  we have then
\[ \h \psi(x,y) = i (\theta_x - \theta_y) + \psi(z, w) - \half \varphi(z) - \half \varphi(w),\]
where $\psi(z,w)$ is the almost analytic extension of $\varphi(z)$.

\ss{Bargmann-Fock Model} \label{BFmodel1}
Here we consider the trivial line bundle $L$ over $\C^m$, both as a first example to illustrate the various definitions and normalization convention and as a local model for a general \Kahler manifold. 

We fix a non-vanishing holomorphic section $e_L$ of $L$, and choose the Hermitian metric  $h$ on $L$, such that $\varphi(z) = -\log \|e_L\|^2_h(z) = |z|^2$. The \Kahler form $\omega$ is then 
\[ \omega = i \ddbar \varphi(z) = i \sum_j dz_j \wedge d \bar z_j. \]
The unit circle bundle $X$ in the dual line bundle $L^* \cong \C^m \times \C$ is given by 
\[ X = \{ (z, \lambda) \in  \C^m \times \C \mid \| \lambda e_L^*\| =1 \} = \{ (z, \lambda) \in  \C^m \times \C \mid | \lambda|  = e^{- \half|z|^2} \}.  \]
We may then choose a trivialization of $X \cong \C^m \times S^1$, with coordinate $(z, \theta)$, 
\[ (z, \theta) \mapsto e^{i\theta} e_L^*(z) / \|e_L^*(z) \| = e^{i \theta - \half |z|^2} e_L^*(z) \in L^*. \]

The contact form $\alpha$ on $X$ is then 
\[ \alpha = d\theta - \frac{i}{2} \sum_j (\bar z_j dz_j - z_j d \bar z_j). \]

If $s(z)$ is a holomorphic function (section of $L^k$) on $\C^m$, then its CR-holomorphic lift to $X$ is 
\[ \h s(z, \theta) = e^{k(i \theta - \half |z|^2)} s(z). \]
Indeed, the horizontal lift of $\pa_{\bar z_j}$ is $ \pa_{\bar z_j}^h =\pa_{\bar z_j} -  \frac{i}{2} z_j \pa_\theta, $
and $\pa_{\bar z_j}^h \h s(z, \theta) = 0$. 

The \Szego kernel $\h \Pi(\h z, \h w)$ for $X=\C^m \times S^1$ is given by 
\[  \h \Pi(\h z, \h w) = \sum_{k>0} \kk^m e^{i k (\theta_z - \theta_w) + k(z \bar w - \half |z|^2 - \half |w|^2)} =\sum_{k>0} \kk^m e^{i k (\theta_z - \theta_w + \Im (z \bar w)) - \half k |z-w|^2} , \] where $\h z =(z, \theta_z), \; \h w = (w, \theta_w),$ and 
where the $k$-th summand is $\h \Pi_k(\h z, \h w)$. 

The Bergman kernel $\Pi_k(z, w)$ for $M=\C^m$ is given by
\[ \Pi_k(z, w) = \kk^{m} e^{k z \bar w}. \]
The Bergman density is the norm contraction of $\Pi_k(z,w)$ on the diagonal
\[ \Pi_k(z) = \Pi_k(z,z) \|e_L^{\ot k} \|^2 = \kk^{m} e^{k z \bar z} e^{-k|z|^2} = \kk^m. \]

In general, for an $m$-dimensional complex vector space $(V, J)$ with a constant \Kahler form $\omega$, we may define a `ground state'
\[ \Omega_{\omega, J} (v) = e^{-\half \omega(v, J v)}, \]
as a real valued function on $V$. 
The Bargmann-Fock Hilbert space for $(V, \omega, J)$ is 
\[ \hcal_{\omega, J} = \{ \psi \Omega_{\omega, J}: \text{ $\psi$ is $J$-holomorphic, and $\psi \Omega_{\omega, J} \in L^2 (V, dL)$}\} \]
where $L$ is the standard Lebesgue measure on $V$.

\ss{Osculating Bargmann-Fock model and Near-Diagonal Scaling Asymptotics}
At
each $z \in M$ there is an osculating Bargmann-Fock or Heisenberg model,
defined as above for the data $(T_z M, J_z, \omega_z)$.
We denote the model Heisenberg \szego kernel on the tangent space by 
\begin{equation} \label{TANGENT}  \hat{\Pi}^{T_z M}_{\omega_z, J_z}: L^2(T_z M \times S^1) \to \hcal(T_z M, J_z, \omega_z)
= \hcal_J.
\end{equation}
If we choose linear coordinates $(z_1, \cdots, z_m)$ on $T_z M$ such that $(T_z M, \omega_z, J_z) \cong (\C^m, \omega_{std}, J_{std})$ , and the obvious coordinate $\theta$ on $S^1 = \R / 2\pi \Z$, then we have
\begin{equation} \label{SCALINGB} \h\Pi^{T_z M}_{\omega_z, J_z}(u, \theta_1,  v, \theta_2)  = (2\pi)^{-m}   e^{i (\theta_1 - \theta_2)} e^{u\cdot \bar{v} - \half (|u|^2 + |v|^2)}=    e^{i (\theta_1 - \theta_2)} e^{i \Im u\cdot \bar{v} - \half |u -v|^2}
\end{equation}

The following near-diagonal asymptotics of the \szego kernel
is the
 key analytical result on which our analysis of the scaling limit for
correlations of zeros  is based.
The lifted \szego kernel is shown in \cite{ShZ02}, and in Theorems 2.2 - 2.3 of \cite{LuSh15} to have the scaling asymptotics. Here we only state the first version since it will be enough for our purpose. 

\begin{theo}[\cite{ShZ02}]
Let $(L, h) \to (M, \omega)$ be a positive line bundle over an $m$-dimensional compact \kahler manifold with \kahler form $\omega = i F_\nabla$. Let $e_L$ be a holomorphic local frame for $L$ and $z_1, \cdots, z_m$ be complex coordinates about a point $z_0 \in M$ such that the \kahler potential $\varphi = -\log \|e_L(z)\|^2_h = |z|^2 + O(|z|^3)$. Then for a $k \in \Z^+$, 
\[  \Pi_k \left( \frac{u}{\sqrt k}, \frac{\theta_1}{k};  \frac{v}{\sqrt k}, \frac{\theta_2}{k} \right) = k^m \h
\Pi_{\omega_z, J_z}^{T_z M}(u, \theta_1; v, \theta_2) (1 + \sum_{r=1}^N k^{-r/2} b_r(u, v) + k^{-(N+1)/2} E_{kN}). \]
where 
\begin{itemize}
\item each $b_r(u,v)$ is a polynomial (in $u, v, \bar u, \bar v)$ of degree at most $5r$. 
\item for all $a > 0$ and $j\geq 0$, there exists a positive constant $C_{jka}$ such that 
\[ | D^j E_{kN}(u,v) | \leq C_{jNa}, \text{ for } |u|+|v| < a. \]
\end{itemize}
\end{theo}

\section{\label{THAFSECT} Proof of Theorem \ref{AF}}

This is the simplest of the results because it does not involve dynamics
of the Hamiltonian flow. That is, it only involves the unitary group \eqref{UNSCALED} of `pseudo-differential' Toeplitz operators and not
the group $U_k(t)$ of Fourier integral Toeplitz operators.

  We  first prove a smoothed version of Theorem \ref{AF}. 
We recall from \eqref{fPBK} that a  smoothly weighted Bergman density is defined by
\begin{equation} \label{fPBK2} \Pi_{k,f} (z) = \Pi_k f({H}_k) \Pi_k (z)
= \sum_{\mu_{k,j}}
f (\mu_{k,j}) \Pi_{k,j}(z) =  \sum_{\mu_{k,j}}
f (\mu_{k,j}) \| s_{k,j}(z)\|_{h^k}^2, 
 \end{equation}
 where $f \in \scal(\R)$ and the sum is over the eigenvalues $\mu_{k,j}$ of the operator $H_k$ defined in \eqref{TOEP}. Note that since $H_k$ commutes with $\Pi_k$ and $\Pi_k \circ \Pi_k = \Pi_k$, we could have written $f(H_k) \Pi_k$ instead of $\Pi_k f(H_k) \Pi_k$.
 The sharp partial Bergman kernels $\Pi_{k, E}$ morally corresponds to the case $f = {\bf 1}_{(E_{\min}, E)}$, where $E_{min} = \min_{z \in M}\{ H(z) \}$.

\begin{prop}\label{AFSMOOTH}  Let $\omega$ be a $C^{\infty}$ metric  on $M$and let $H \in C^{\infty}(M)$. Let
$f \in C_c^{\infty}(\R)$.
Then the  density of states of the smoothly weighted Bergman kernel is given by the asymptotic
formulae:
$$   \Pi_{k, f}(z)  \simeq \Pi_k(z)(
    f(H(z)) + c_{1,f}(z) k^{-1} + c_{2,f}(z)k^{-2} + \cdots),    
 $$
 where  $c_{i,f}(z)$ depend on $f$ upto its $i$-th derivative at $H(z)$. 
\end{prop}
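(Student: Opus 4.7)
The plan is to reduce the proposition to the symbolic calculus of Toeplitz pseudodifferential operators via Fourier inversion. Writing
\[ \Pi_{k,f}(z) = \int_{\R} \hat{f}(\tau)\,\bigl[\Pi_k e^{i\tau H_k}\Pi_k\bigr](z,z)\,\frac{d\tau}{2\pi}, \]
the task is to obtain a complete asymptotic expansion of the diagonal kernel $\Pi_k e^{i\tau H_k}\Pi_k(z,z)$ in powers of $k^{-1}$, uniformly for $\tau$ in bounded intervals, with coefficients that are polynomial in $\tau$, and then integrate against $\hat{f}$. The crucial point is that here we exponentiate $H_k$ (not $kH_k$), so the time evolution stays in the category of Toeplitz \emph{pseudodifferential} operators rather than Toeplitz Fourier integral operators, and no Hamiltonian dynamics enter.

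First, I would establish that $\Pi_k e^{i\tau H_k}\Pi_k$ is a Toeplitz operator on $H^0(M,L^k)$ whose full symbol expansion starts with the principal symbol $e^{i\tau H}$. Concretely, I would lift to the circle bundle $X$, use the Boutet de Monvel--Sj\"ostrand parametrix \eqref{SZEGOPIintroa} for $\hat{\Pi}_k$, and combine it with Duhamel's formula
\[ e^{i\tau H_k} = 1 + i \int_0^\tau e^{i(\tau-s)H_k} H_k\, ds \]
(iterated) to generate a semiclassical expansion of the Schwartz kernel. Alternatively, since $H_k$ is a bounded, zeroth-order Toeplitz operator with principal symbol $H$ and computable subprincipal symbol, the Boutet de Monvel--Guillemin Toeplitz calculus directly yields that $e^{i\tau H_k}$ is Toeplitz of order zero, with symbol
\[ \sigma\!\bigl(e^{i\tau H_k}\bigr)(z) \sim e^{i\tau H(z)} + \sum_{j\geq 1} k^{-j} A_j(\tau,z), \]
where each $A_j(\tau,z)$ is a polynomial in $\tau$ of degree at most $2j$ whose coefficients are smooth functions of $z$ involving $H$ and its derivatives up to finite order. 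Restricting to the diagonal and using the standard Tian--Zelditch--Catlin expansion $\Pi_k(z,z) = \bigl(k/2\pi\bigr)^m(1 + O(k^{-1}))$ gives
\[ \Pi_k e^{i\tau H_k}\Pi_k(z,z) = \Pi_k(z)\Bigl(e^{i\tau H(z)} + \sum_{j\geq 1} k^{-j} \widetilde A_j(\tau,z)\Bigr), \]
with $\widetilde A_j$ still polynomial in $\tau$, uniformly in $\tau$ on compacta and with remainders controlled by $\tau$-derivatives.

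The second step is to integrate against $\hat{f}(\tau)/(2\pi)$. The leading contribution is the Fourier inversion identity $\int \hat{f}(\tau)e^{i\tau H(z)}d\tau/(2\pi) = f(H(z))$, and for each subleading term the identity
\[ \int_\R \hat{f}(\tau)\,\tau^n e^{i\tau H(z)}\,\frac{d\tau}{2\pi} = (-i)^n f^{(n)}(H(z)) \]
shows that $c_{j,f}(z)$ is a finite linear combination of $f^{(n)}(H(z))$ with $n\leq \deg_\tau \widetilde A_j \leq 2j$, hence depends on $f$ only through its derivatives at $H(z)$ up to a finite (in fact $\leq 2j$) order. Since $f\in C_c^\infty(\R)$ implies $\hat{f}\in \scal(\R)$, all $\tau$-integrals converge absolutely and repeated integration by parts legitimates the uniformity of the expansion.

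The principal obstacle is the symbol-calculus step: showing that $\Pi_k e^{i\tau H_k}\Pi_k$ admits a complete expansion whose coefficients are \emph{polynomial} in $\tau$ with controlled degree growth, uniformly in $\tau$ and with errors that survive integration against $\hat{f}$. I would handle this by induction on the order $j$ using Duhamel's formula: at each step a new factor of $\tau$ enters (from the time integral) and a new factor of $k^{-1}$ enters (from the subprincipal term in the Toeplitz composition $\Pi_k H_k \Pi_k$), so after $j$ iterations one obtains a $k^{-j}$ coefficient that is polynomial of degree $\leq 2j$ in $\tau$; this is the structural estimate that, after Fourier inversion, produces the claimed dependence of $c_{j,f}$ on finitely many derivatives of $f$.
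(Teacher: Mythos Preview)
Your approach coincides with the second of the two proofs the paper gives: the paper also writes $\Pi_k e^{i\tau H_k}\Pi_k = \Pi_k u_k(\cdot;\tau)\Pi_k$ for a semiclassical symbol $u_k = e^{i\tau H} + \sum_{j\ge 1}k^{-j}u_j$ obtained from the transport hierarchy \eqref{SCHEQ}, and then sets $\sigma_{k,f}(w) = \int_\R \hat f(\tau)\,u_k(w;\tau)\,d\tau/(2\pi)$ with principal symbol $f(H)$. The paper's first and more detailed proof, however, goes through the Helffer--Sj\"ostrand formula $f(H_k) = -\pi^{-1}\int_\C \bar\partial\tilde f(\lambda)(\lambda-H_k)^{-1}\,dL(\lambda)$ together with a symbolic parametrix $\hat B_k(\lambda) \sim \sum_j k^{-j}b_j(\cdot;\lambda)$ for the Toeplitz resolvent. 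That route is what the paper actually uses to pin down the derivative dependence: the pole structure $b_j = P_j(\cdot;\lambda)(\lambda-H)^{-j-1}$ (quoted from Dimassi--Sj\"ostrand) yields, after integration by parts in the Helffer--Sj\"ostrand integral, that $c_{j,f}$ involves only $f^{(0)},\dots,f^{(j)}$ at $H(z)$.

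Your Duhamel/transport argument gives $\deg_\tau \widetilde A_j \le 2j$, hence dependence on derivatives of $f$ up to order $2j$, which is weaker than the $\le j$ asserted in the proposition. (Indeed, without cancellations of Moyal/Poisson type, $2j$ is what the generic star-product recursion produces.) So your outline correctly establishes the existence of the expansion and its leading term, and it matches the paper's Fourier-transform proof in spirit; but for the sharp derivative count the paper relies on the resolvent/Helffer--Sj\"ostrand argument rather than on the transport equations.
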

We give two proofs for the above proposition, one using Helffer-\Sjostrand formula and the fact $\Pi_k (\lambda - H_k)^{-1} \Pi_k$ is a Toeplitz operator, the other using Fourier transformation of $f$ and the fact that $\Pi_k e^{i t H_k} \Pi_k$ is a Toeplitz operator.

\subsection{Proof of Proposition \ref{AFSMOOTH} using Helffer-\Sjostrand formula}

\begin{proof}

We use  the Helffer-Sjoestrand formula \cite{DSj}. Let $f \in C_c^{\infty}(\R)$ 
and let    $\wt{f} (\lambda) \in C_c^{\infty}(\C)$ be an almost analytic extension
of $f$ to $\C$, with $\dbar \wt{f} = 0$ to infinite order on $\R$. Then
$$f({H}_k) = -\frac{1}{ \pi} \int_{\C} \dbar \wt{f}(\lambda) (\lambda - {H}_k)^{-1} dL(\lambda) $$
where $dL$ is the Lebesgue measure on $\C$.  We recall that
the almost analytic extension is defined as follows: Let $\psi(x) = 1$ on 
$\rm{supp} f$ and let $\chi$ be a standard cutoff function. Then, define
\begin{equation} \label{AA} \wt{f}(x + i y) = \frac{\psi(x)}{2 \pi} \int_{\R} e^{i (x + i y) \xi} \chi(y \xi) \hat{f}(\xi) d \xi. \end{equation}
It is verified in \cite[p. 94]{DSj} that this defines an almost analytic extension.  This formula has previously been adapted to
Toeplitz operator in \cite{Ch03}.

 For $\Im \lambda \not= 0$ there exists a unique
semi-classical symbol $\sum_{j=0}^\infty k^{-j} b_j(w; \lambda)$ with $b_0 = \frac{1}{\lambda - H(w)}$ and a Toeplitz
operator $\hat{B}_k(\lambda)$ as an approximation for $\Pi_k (\lambda - H_k)^{-1} \Pi_k$,  such that
$$\hat{B}_k(\lambda) \circ (\lambda- {H}_k) = \Pi_{k} + R_{B, k}(\lambda), \;\; \lambda \in \C \RM \R,$$
where  $R_{B, k}(\lambda)$ is a residual Toeplitz operator (i.e. of order $k^{-\infty}$).  
Thus
$$ \Pi_{k,f} := \Pi_k f({H}_k)  \Pi_k \approx  -\frac{1}{ \pi} \int_{\C}  \dbar \wt f(\lambda) \h B_k(\lambda)  dL(\lambda) $$ 
is a Toeplitz operator with complete symbol
$$\sigma_{k,f}(w) \sim - \sum_{j=0}^{\infty} \frac{k^{-j} }{ \pi} \int_{\C} \dbar \wt{f}(\lambda) b_j(w; \lambda) dL(\lambda), $$
and
principal
symbol $$ \sigma_f^{prin} (w) =- \frac{1}{\pi} \int_{\C} \dbar \wt{f}(\lambda) (\lambda - H(w))^{-1}dL(\lambda) = f(H(w)). $$

We can then express the density for $\Pi_{k,f}$ as
$$\Pi_{k, f}(z) = \int_X \h \Pi_k(\h z,\h w) \sigma_{k, f}(w) \h \Pi_k(\h w,\h z) d\Vol_X(w) $$
and use the Boutet de Monvel-\Sjostrand parametrix to compute the
resulting expansion stated in Proposition \ref{AFSMOOTH}.

Finally, we show how the coefficients in the expansion $c_{i,f}(w)$ depends on   $j$-th derivative of $f$ at $H(w)$, for $j \leq i$. For this it suffices to prove the same statements for the terms,
$$  \int_{\C} \dbar \wt{f}(\lambda) b_j(w; \lambda) dL(\lambda)$$
for $j \geq 1$. As discussed in \cite{DSj}, the lower order terms $b_j(w; \lambda)$ of $\hat{B}_k$ have the form, $$b_j(w; \lambda) = P_j(w; \lambda) (\lambda - H(w))^{-j -1}, $$
where $P_j$ is  polynomial in $\lambda$. Writing
$ (\lambda - H(w))^{-j -1}  = \frac{1}{j!} \frac{\partial^j}{\partial \lambda^j}  (\lambda - H(w))^{-1}$ 
and integrating by parts gives the value as a sum of holomorphic derivatives 
$\dbar (\d^{\ell}\wt{f}  (\lambda)) $ evaluted at $\lambda  = H(w)$ of degree $\ell \leq j$. This shows $c_{i,f}(w)$ depends on $f$ through $f^{(0)}(H(z)), \cdots, f^{(i)}(H(z))$. 
\end{proof}

We get immediately the following corollary
\bc \label{vanishing}
If $f$ vanishes in an open neighborhood of $H(z)$, then 
\[ \Pi_{k,f}(z) = O(k^{-\infty}). \]
\ec

\subsection{Proof of Proposition \ref{AFSMOOTH} using Fourier Transformation}

Since we use the Fourier inversion formula in other applications of the 
functional calculus, we digress to describe an alternative to  the Helffer-\Sjostrand formula above.  

We may construct $W_k(t) := \Pi_k e^{i t {H}_k} \Pi_k$ as a Toeplitz operator
of the form
\begin{equation} \Pi_k u_k(w; t) \Pi_k,\;\; u_k(w; t) = e^{i t H(w)} + \sum_{j=1}^{\infty} k^{-j} u_j(w; t), \end{equation}
where $u_k(w; t)$ denotes multiplication by the function $u_k(w; t)$, where $w \in M, t \in \R$. For instance,
we can write the equation for $W_k(t)$ as
 the unique solution of the propagator initial
value problem,

\begin{equation} \label{SCHEQ}  \left(\frac{1}{i } \frac{d}{dt}  - {H}_{k} \right) W_k(t) = 0 , \;\;\; \text{with  }  W_k(0)  =\Pi_k. 
\end{equation}
%
There exists a semi-classical symbol 
$$\sigma_{k,f} (w) = f(H(w)) + k^{-1} c_{f,1}(z) + \cdots, $$ 
so that
\begin{equation} \label{f(H)} 
\Pi_k f( H_k) \Pi_k= \Pi_k \sigma_{k,f} (w) \Pi_k, \;\; \;  \end{equation}
Indeed,
\[ \Pi_{k,f} =  \Pi_k f(H_k) \Pi_k
=  \int_{\R} \hat{f}(t) \Pi_k e^{i t \h H_k} \Pi_k(z,z) \frac{dt}{2\pi}
=  \Pi_k \left( \int_{\R}  \hat{f}(t)  u_k(w; t) \frac{dt}{2\pi} \right) \Pi_k
\]
Hence, the symbol for $\Pi_{k,f}$ is then $\sigma_{k,f} (w) = \int_{\R}  \hat{f}(t)  u_k(w; t) \frac{dt}{2\pi}$, with the principal symbol $f(H)$. 

\subsection{Proof of Theorem \ref{AF}}
We first prove the result in Theorem \ref{AF} about $\Pi_{k}(z)^{-1} \Pi_{k,E}(z)$, Eq. \eqref{TOEPeq2}. It suffices to prove the result in the case where $z$ lies in the forbidden region $H(z) > E$. Indeed, since for $H(z) <E$, we may use $-H$ and $-E$ as the energy function and the threshold, write 
\[ \Pi_{k}(z)^{-1} \Pi_{k, H < E}(z) = 1 - \Pi_{k}(z)^{-1} \Pi_{k, -H < -E}(z), \]
and use $\Pi_{k, -H < -E}(z) = O(k^{-\infty})$ from the result in the forbidden region. Hence from now on, we  assume $H(z) > E$. 

Let $\epsilon$ be small enough positive number, such that $H(z) > E + \epsilon$. It suffices to assume $f \in C_0^{\infty}(\R)$, $f \geq 0$,  with  $f \equiv 1$ on the interval $(E_{min}, E)$ and $f \equiv 0$ outside of $(E_{min} - \epsilon, E+\epsilon)$,  since the general result can be obtained by taking the difference of two such functions.
Then, $f(x) > {\bf 1}_{[E_{min}, E]}(x)$, and
\[ \Pi_{k,f}(z) = \sum_{j} f(\mu_{k,j}) \|s_{k,j}(z)\|^2 \geq  \sum_{\mu_{k,j} < E}  \|s_{k,j}(z)\|^2 = \Pi_{k,E}(z). \]
Since we have $f(H(w)) \equiv 0$ for $w$ in an open neighborhood of $z$, by Corollary  \ref{vanishing}, 
$ \Pi_{k,f}(z) = O(k^{-\infty})$. Since $\Pi_{k,E}(z) > 0$, we have $$|\Pi_{k,E}(z)| = \Pi_{k,E}(z) <  \Pi_{k,f}(z) = O(k^{-\infty}).$$ 

To prove the statement in Eq. \eqref{TOEPeq1}, we split the sum in $\Pi_{k,f}(z)$ according to the eigenvalue
\[ \Pi_{k,f}(z) = \Pi_{k,f, <E}(z) + \Pi_{k,f, >E}(z), \]
where 
\[ \Pi_{k,f, <E}(z) := \sum_{\mu_{k,j} < E} f(\mu_{k,j}) \|s_{k,j}(z)\|^2, \quad \Pi_{k,f, >E}(z) := \sum_{\mu_{k,j} > E} f(\mu_{k,j}) \|s_{k,j}(z)\|^2. \] 

For  $z \in \fcal$, $H(z)>E$, we need to prove that 
\[ 
\lim_{k \to \infty} k^{-m} \Pi_{k,f, <E}(z) = 0. 
\]
We may define a non-negative smooth function $\chi$ on $\R$ that is $1$ for  $x < E$, and $0$ for $x > H(z) > E$. Then
\[ 0 < \Pi_{k, f, <E} (z) < \Pi_{k, f  \chi}(z) = O(k^{-\infty}), \]
where we used Corollary  \ref{vanishing} and that $f\chi(w)$ vanishes identically for $w$ in a neighborhood of $H(z)$. 
Similarly, if $H(z) < E$, then 
\[ 
\lim_{k \to \infty} k^{-m} \Pi_{k,f, >E}(z) = 0, 
\]
and we get
\[ \lim_{k \to \infty} k^{-m} \Pi_{k,f, <E}(z) = \lim_{k \to \infty} k^{-m} \Pi_{k,f}(z) = f(H(z)). \]
This finishes the proof of  the statement of Theorem \ref{AF} regarding the leading order asymptotic behavior of $\Pi_{k, f}(z)$.
%
%
%
%
%
%

\section{\label{LIFTFLOW} Hamiltonian flow and its contact lifts to $X_h$}

In order to prove the main results, we need to quantize the Hamiltonian
flow of a Hamiltonian $H: M \to \R$. The definition is based on lifting
the Hamiltonian flow to a contact flow on $X_h$. The purpose of this
section is to explain this lift.

\subsection{Lifting the Hamiltonian flow to a contact flow on $X_h$.}\label{LIFT}
Let $H$ be a Hamiltonian function on $(M, \omega)$. Let $\xi_H$ be the Hamiltonian vector field associated to $H$, that is, 
\[ d H(Y) = \omega(\xi_H, Y) \]
for all  vector field $Y$ on $M$. Let $g^t$ be the flow generated by $\xi_H$. 

Recall $(X,\alpha)$ is a contact manifold, and $X \to M$ is a circle bundle, such that $d\alpha = \pi^* \omega$. 
We abuse notation and still use $H$ for the lifted Hamiltonian function $\pi^* H$ on $X$. The horizontal lift $\xi_H^h$ of $\xi_H$ is given by the following conditions
\[ \alpha(\xi_H^h) = 0, \quad \pi_* \xi_H^h = \xi_H. \]
The contact lift $\h \xi_H$ of $\xi_H$ is then defined by
\be \hat{\xi}_H := {\xi}_H^h -  H R.  \label{hxi} \ee

 We define $\hat{g}^t: X_h \to X_h$ to be the flow generated by $\h \xi_H$, 
\begin{equation} \label{gtdef} \h{g}^t= \exp t  \h \xi_{H}.  \end{equation}
\bl
The flow   $\h g^t$  preserves the contact form $\alpha$, and  commutes with the $S^1$ action of rotation
in the fibers of $X \to M$.
\el
\bpf
Since $d \alpha = \omega$, we have
\[\lcal_{\hat{\xi}_H}\alpha = \lcal_{{\xi}_H^h -  H R} \alpha = (\iota_{{\xi}_H^h  -  H R} \circ d + d \circ \iota_{{\xi}_H^h - H R}) \alpha = \iota_{\xi_H^h} \pi^* \omega  + d(- H \alpha(R)) = H  - H = 0. \]
Since $\h \xi_H$ preserves $\alpha$, hence $\lcal_{\h \xi_H} (R) = 0$, i.e. $[\h \xi_H, \pa_\theta]=0$, and the flow $\h g^t$ commutes with the fiberwise rotation.
\epf

Let $(z, \theta)$ be a local coordinate for $X|_U \cong U \times S^1$ over open neighborhood $U$,  as in \eqref{Xcor}. 
\bl
In coordinate $(z, \theta)$, $\xi_H^h$ and $\h \xi_H$ can be written as 
\[\xi_H^h = (\xi_H, \half \la d^c \varphi, \xi_H \ra \pa_\theta), \quad \h \xi_H = (\xi_H, (\half \la d^c \varphi, \xi_H \ra - H)\pa_\theta). \]
And the  flow $\h g^t$ has the form
$$\begin{array}{lll} \hat{g}^t(z, \theta)  & =  & (g^t(z), \theta 
  +   \int_0^t \half \la d^c \varphi, \xi_H \ra(g^s(z))ds 
- \int_0^t H (g^s(z)) ds)\\ && 
\\ & =  & (g^t(z), \theta 
  +   \int_0^t \half \la d^c \varphi, \xi_H \ra(g^s(z))ds  - t H(z).   \end{array}$$
\el
\begin{proof} 
Since $\alpha = d\theta - \half \pi^* d^c \varphi$, and $\la \alpha, \xi_H^h \ra = 0$, we have then 
\[ \la d \theta, \xi_H^h \ra = \la \half \pi^* d^c \varphi, \xi_H^h \ra = \la \half d^c \varphi, \xi_H \ra. \]
Hence $\xi_H^h = (\xi_H, \half d^c \varphi(\xi_H) \pa_\theta)$. The formula for $\h \xi_H$ follows from \eqref{hxi}. The statement for $\h g^t$ follows since $H$ is constant along the Hamiltonian flow.
\end{proof}

Since $\hat{g}^t$ preserves $\alpha$ it preserves the horizontal distribution $H(X_h) = \ker \alpha$, i.e.
\begin{equation} \label{HSPLIT} D \hat{g}^t: H(X)_x \to H(X)_{\hat{g}^t(x)}. \end{equation} It also preserves the vertical
(fiber) direction and therefore preserves the splitting $V \oplus H$ of $T X$.  When $g^t$ is non-holomorphic, $\hat{g}^t$ is not CR holomorphic, i.e. does not preserve the horizontal complex structure $J$ on $H(X)$ or the
splitting of $H(X) \otimes \C$ into its $\pm i $ eigenspaces.

\subsection{Linear Hamiltonian function and Heisenberg group action} \label{BFmodel2}

Let $\C^m$ be equipped with coordinates $z_j = x_j + i y_j$ with $j=1, \cdots, m$. We use $x_i, y_i$ as coordinate on $\R^{2m}$. Let $L$ be the trivial bundle over $\C^m$, and $\varphi(z) = |z|^2$. 

A linear Hamiltonian function $H$ on $\C^m$ has the form,
\be H(x,y) = \Re (\alpha \cdot \bar z) = \half (\alpha \bar z + \bar \alpha z),\ee
for some $0 \neq \alpha \in \C^m$. Then 
\bea
\xi_H &=& \sum_j \frac{1}{2i} (\alpha_j \pa_{z_j} -  \bar \alpha_j \pa_{ \bar z_j}),  \\
\xi_H^h &=& \sum_j \frac{1}{2i} (\alpha_j (\pa_{z_j} + \frac{i}{2} \bar z_j \pa_\theta)  -  \bar \alpha_j (\pa_{ \bar z_j }- \frac{i}{2}   z_j \pa_\theta)) = \xi_H + \half H \pa_\theta,  \\
\hat{\xi}_H &=& \xi_H^h - H \pa_\theta =  \xi_H - \half H \pa_\theta.  \\
\eea
where we abused notation and write $(\xi_H, 0)$ on $U \times S^1$ as $\xi_H$. 

The lifted Hamiltonian flow   $\h g^t(z) = \exp( t \h \xi_H)$ is then
\begin{equation} \label{LINEARIZATION}  \h g^t(z, \theta) = (z + \frac{\alpha t}{2i}, \theta - \frac{t}{4}  (\alpha \bar z + \bar \alpha z))
\end{equation}

%

The linear Hamiltonian function generate translation on $\C^m \times S^1$, which is exactly the action of the reduced Heisenberg group, which we now review.
The simply connected Heisenberg group \cite{F89, S93} of dimension $(2m + 1)$ is $\H^m \cong \R^{2m} \times \R = \{(x, y, t) \mid x, y \in \R^n, t \in \R\}$, with a multiplication law
\be (x, y, t) (x', y', t') = (x+ x', y+y', t  + t' -  (x y' - y x')). \label{HeisAct} \ee
The reduced Heisenberg group is $\H^m_{red} = \R^{2m} \times (\R / 2\pi \Z)$. The center of $\H^m_{red}$ is the circle subgroup $S^1 \cong \{ (0,0, \theta) \}$. If we identify $\R^{2m}$ with $\C^m$ by $z_i = x_i + \sqrt{-1}  y_i$, and $\R / 2\pi \Z$ with the argument of $\C$, then we may identify $\H^m_{red} \cong \C^m \times S^1$.
And the group action is given by 
\[ (z, \theta) \circ (z', \theta') = (z+z', \theta+\theta' + \Im( z \bar z')). \] Furthermore, $\H^m_{red}$ can be identified  with the unit co-circle bundle $X$ of the dual bundle  $L^*$ of the positive  hermitian line bundle over $\C^m$ with \kahler potential $\varphi(z) = |z|^2$.  


\bl
The contact form $\alpha = d\theta + \frac{i}{2} \sum_j (z_j d\bar z_j - \bar z_j dz_j)$ on $\H^m_{red}$ is invariant under the left multiplication
\[ R_{(z_0, \theta_0)}: (z, \theta) \mapsto (z_0, \theta_0)  \circ (z, \theta) = (z + z_0, \theta + \theta_0 + \frac{ z_0 \bar z-  \bar z_0 z }{2i} ). \]
\el
\bpf
\[ R_{(z_0, \theta_0)}^* \alpha|_{(z, \theta)} = d (\theta + \theta_0 + \frac{\bar z z_0 - \bar z_0 z}{2i} ) + \frac{i}{2} \sum_j ((z_j + z_{0j}) d \bar z_j - (\bar z_j+\bar z_{0j}) d z_j) = \alpha|_{(z, \theta)}.\]
\epf

\bl
The lifted contact flow $\h g^t$ generated by $H =\half (\alpha \bar z + \bar \alpha z) $ acts on $\H^m_{red}$ by left group multiplication by $(\frac{\alpha t}{2 i}, 0) \in \H^m_{red}$. 
\el
\bpf
\[ (\frac{\alpha t}{2 i}, 0)  \circ (z, \theta) = (z + \frac{\alpha t}{2 i}, \theta + \Im (\frac{\alpha t}{2 i} \bar z)) = (z + \frac{\alpha t}{2 i}, \theta  - \half \Re (\alpha \bar z) t) = \h g^t(z, \theta). \]
\epf

The Hardy space of square-integrable CR holomorphic functions $\hcal$ on $X$ is preserved under the reduced Heisenberg group $\H^m_{red}$, and decomposes into Fourier components according to the action by the central subgroup $S^1$
\[ \hcal = \bigoplus_{n \in \Z, n>0} \hcal_n, \quad \hcal_n = \{ e^{i n (\theta - |z|^2/2)} f(z) \mid f(z) \text{ holomorphic }, \int_{\C^m} |f|^2 e^{-n|z|^2} dL < \infty \}, \]
where $dL$ is the Lesbegue measure on $\C^m$ and the integrability condition forces $n>0$. 

 In the case of the simply connected $(2m + 1)$-dimensional Heisenberg group $\H^m$,  the Szeg\"o projector
$S(x, y)$  is given by 
convolution with 
\begin{equation} \label{HEISSZEGO} K(x)  = C_m \frac{\partial}{\partial t} (t + i |\zeta|^2/2)^{-m} =
C_m   \int_0^{\infty} e^{r  (it  - |z|^2/2)} r^{m} d r  , \;\;\; x = (z, t), \end{equation}
for some constant $C_m$ depending on the dimension $m$. More precisely, if $x = (z, t)$, $y = (w, s)$, and the projector is given by
\[ (S f)(x) = \int S(x,y) f(y) dy = \int K(y^{-1}x) f(y) dy \]
and 
\be S(x,y) = K(y^{-1}x) = K(z - w, t-s -\Im(w \bar z)) = C_m \int_0^\infty e^{i r (t-s)} e^{ r(z \bar w -   |z|^2/2 -  |w|^2/2)} r^m dr 
\ee
 In the reduced Heisenberg group,
\begin{equation} \label{HEISSZEGOred} S_{red }(x, y) = \sum_{n \in \Z} K(y^{-1}x (0,2\pi n)) = \sum_{n \in\Z}  C_m \int_0^\infty e^{i r (t-s)} e^{ r(z \bar w -   |z|^2/2 -  |w|^2/2)}  r^m dr 
\end{equation}
Using the Poisson summation formula $\sum_{m \in \Z} e^{2\pi i k x} = \sum_{n \in \Z} \delta(x-n)$, we have
\[ S_{red}(x,y)= C_m \sum_{k \in \Z_{>0}}  k^m e^{i k (t-s)} e^{ k(z \bar w -   |z|^2/2 -  |w|^2/2)} , \quad \text{where } x=(z, t), y=(w,s) . \]
If we denote the Fourier components relative
to the $S^1$ action of $\H^m_{red}$ by $\hat{\Pi}_k^\H(x,y)$, then we have
\begin{equation}\label{szegoheisenbergintro}   \hat{\Pi}_k^\H(x,y)=C_m k^m
e^{i k (t-s )} e^{ k(z\cdot\bar w -\half |z|^2
-\half|w|^2) }, \end{equation}
and  $S_{red}(x,y) = \sum_{k>0} \hat{\Pi}_k^\H(x,y)$. 

%
%
%
%
%
%
%
%

\section{Toeplitz Quantization of Contact Transformation} \label{TQD}
Let $(M, \omega, L, h)$ be a polarized \Kahler manifold, and $\pi: X \to M$ the unit circle bundle in the dual bundle $(L^*, h^*)$.  $X$ is a contact manifold, and can be equipped with a contact one-form $\alpha$, whose associated Reeb flow $R$ is the rotation $\pa_\theta$ in the fiber direction of $X$. Any Hamiltonian vector field $\xi_H$ on $M$ generated by a a smooth function $H: M \to R$ can be lifted to a contact Hamiltonian vector field $\h \xi_H$ on $X$. In this section, we review the relevant result about the quantization of this contact vector field $\h \xi_H$, acting on holomorphic sections of $H^0(M, L^k)$, or equivalently on CR-holomorphic functions on $X$: $\hcal(X) = \oplus_{k\geq 0} \hcal_k(X)$. We follow the exposition of \cite{RZ, BG81} closely. 

An operator $T: C^\infty(X) \to C^\infty(X)$ is called a {\em Toeplitz operator of order $k$}, denoted as $T \in \tcal^{k}$, if it can be written as $T = \h \Pi \circ Q \circ \h \Pi$, where $Q$ is a pseudo-diffferential operator on $X$ . Its principal symbol $\sigma(T)$ is the restriction of the principal symbol of $Q$ to the symplectic cone 
\[ \Sigma = \{(x, r \alpha(x)) \mid r > 0\} \cong X \times \R_+ \subset T^*X.\] 
The symbol satisfies the following properties
\[
\bcs
\sigma(T_1 T_2) = \sigma(T_1) \sigma(T_2); \\
\sigma([T_1, T_2]) = \{ \sigma(T_1), \sigma(T_2)\}; \\
\text{If $T \in \tcal^k$, and $\sigma(T) = 0$, then $T \in \tcal^{k-1}$.}
\ecs
\]
The choice of the pseudodifferential operator $Q$ in the definition of $T = \hPi \, Q \, \hPi$ is not unique. However, there exists some particularly nice choices. 
\bl[\cite{BG81} Proposition 2.13] \label{lm:niceQ}
Let $T$ be a Toeplitz operator on $\Sigma$ of order $p$, then there exists a pseudodifferential operator $Q$ of order $p$ on $X$, such that $[Q, \hPi] = 0$ and $T = \hPi \, Q \, \hPi$. 
\el

Now we specialize to the setup here, following closely \cite{RS}. Consider an order one self-adjoint Toeplitz operator 
\[ T = \h \Pi \circ (H \cdot \mathbf{D}) \circ \h \Pi, \] where $\mathbf{D} = (-i \pa_\theta)$ and $\pa_\theta$ is the fiberwise rotation vector field on $X$, and $H$ is multiplication by $\pi^{-1}H$. We note that $\mathbf{D}$ decompose $L^2(X)$ into eigenspaces $\oplus_{k \in \Z} L^2(X)_k$ with eigenvalue $k \in \Z$. 
The symbol of $T$ is a function on $\Sigma \cong X \times \R_+$, given by 
\[ \sigma(T) (x, r)=  (\sigma(H) \sigma(\mathbf{D})|_\Sigma)(x,r) = H(x) r, \quad \forall (x,r) \in \Sigma. \]
\bd[\cite{RS}, Definition 5.1]
Let $\h U(t)$ denote the one-parameter subgroup of unitary operators on $L^2(X)$, given by 
\be \h U(t) = \hPi\, e^{i t \hPi (\mathbf{D} H) \hPi} \, \hPi, \ee
and let $\h U_k(t)$ \eqref{UkDEF} denote the Fourier component acting on $L^2(X)_k$:
\be \h U_k(t)  =  \hPi_k \, e^{i t \hPi (k H) \hPi} \, \hPi_k. \label{UkDEFb} \ee
We use $U_k(t)$ to denote the corresponding operator on $H^0(M, L^k)$. 
\ed

\bp[\cite{RS}, Proposition 5.2]
$\h U(t)$ is a group of Toeplitz Fourier integral operators on $L^2(X)$, whose underlying canonical relation is the graph of the time $t$ Hamiltonian flow of $r H$ on the sympletic cone $\Sigma$ of the contact manifold $(X,\alpha)$. 
\ep

We warn the reader that $\hPi\, e^{i t \hPi (\mathbf{D} H) \hPi} \, \hPi$ in general is not equal to $\hPi\, e^{i t  \mathbf{D} H} \, \hPi$, since $\mathbf{D} H$ and $\hPi$ may not commute. However, thanks to Lemma \ref{lm:niceQ}, one can always find a $Q$ replacing $\mathbf{D} H$, such that $[Q, \hPi]=0$ and $\hPi (\mathbf{D} H) \hPi = \hPi Q \hPi = Q \hPi = \hPi Q$.  Thus 
\[ U(t_1) \circ U(t_2) = \hPi\, e^{i t_1 \hPi Q \hPi} \, \hPi \hPi\, e^{i t_2 \hPi Q \hPi} \, \hPi = \hPi\, e^{i (t_1+t_2)   Q  } \hPi = U(t_1 + t_2). \]
indeed forms a group. 

\brem
In the introduction, we defined a semi-classical (i.e., depending on $k$) Toeplitz operator $H_k$ acting downstairs on $L^2(M, L^k)$, or equivalently an operator $\h H_k$ acting on $L^2(X)$ by
\[ \h H_k: = \hPi_k \circ [\frac{i}{k} \xi_H^h +  H] \circ \hPi_k .\]
The collections $\{\h H_k\}$ assemble into a homogeneous degree one Toeplitz operator $\h H$ acting on $L^2(X)$:
\[ \h H = \oplus_{k \geq 0}    k \h H_k = \hPi \circ [i \xi_H^h + (-i \pa_\theta) H] \circ \hPi: L^2(X) \to \hcal(X). \]
Compared  with $T=\hPi \circ  [(-i \pa_\theta) H] \circ \hPi$, we claim that they have the same principal symbol on $\Sigma$, indeed
\[ \sigma(\h H) - \sigma(T) = \sigma(i \xi_H^h)|_\Sigma = \la p, -\xi_H^h(x) \ra|_{p = r \alpha(x)} = 0, \]
since $\la \alpha , \xi_H^h \ra = 0$. 
\erem

When we say a one-parameter family of unitary operators $U(t)$ on $\hcal(X)$ {\em quantizes the Hamiltonian flow} $\exp (t \xi_H)$ on $(M, \omega)$, we mean the real points of  canonical relation of complex FIO $U(t)$ in $T^*X \times T^*X$ is the graph $\{(x, \Psi_t(x)) \mid x \in \Sigma\}$ of the Hamiltonian flow $\Psi_t$ generated by $r H$ on the symplectic cone $(\Sigma, \omega_\Sigma)$. The quantization  is not unique, indeed, if $A$ is a pseudodifferential operator of degree zero on $X$, and $V = e^{i A}$ is unitary pseudodifferential operator, then $V^* U(t) V$ is another quantization with the same principal symbol.

\begin{prop} \label{SC}
There exists a semi-classical symbol  $\sigma_{k}(t)$ so that the unitary group \eqref{UkDEFb}  has the form
\be  \label{TREP}  \hat{U}_k(t)   = \hat{\Pi}_{k}  (\hat{g}^{-t})^* \sigma_{k}(t) \hat{\Pi}_{k}  \ee
modulo smooth kernels of order $k^{-\infty}$.
\end{prop}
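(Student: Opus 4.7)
\medskip\noindent\textbf{Proof proposal.}
The plan is to construct the right-hand side of \eqref{TREP} as a parametrix for the propagator equation characterizing $\hat{U}_k(t)$, and then to conclude by uniqueness modulo $O(k^{-\infty})$. Concretely, $\hat{U}_k(t)$ is a solution of
\begin{equation*}
 \frac{1}{i}\partial_t \hat{U}_k(t) = \hat{H}\,\hat{U}_k(t),\qquad \hat{U}_k(0)=\hat{\Pi}_k,
\end{equation*}
where $\hat{H}$ is the homogeneous degree-one Toeplitz operator from the Remark above, with principal symbol $rH$ on the symplectic cone $\Sigma \cong X\times\R_+$. A standard Duhamel argument shows this problem has a unique solution modulo $O(k^{-\infty})$, so it suffices to exhibit $V_k(t):=\hat{\Pi}_k(\hat{g}^{-t})^*\sigma_k(t)\hat{\Pi}_k$ satisfying the same equation with the same initial datum modulo $O(k^{-\infty})$, after which $\hat{U}_k(t) - V_k(t)=O(k^{-\infty})$.

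Make the ansatz $\sigma_k(t)\sim \sum_{j\geq 0}k^{-j}\sigma_j(\cdot,t)$, viewed as a semi-classical symbol on $X$ (equivalently, a multiplication operator inserted between the two copies of $\hat{\Pi}_k$). The initial condition $V_k(0)=\hat{\Pi}_k$ forces $\sigma_0(\cdot,0)\equiv 1$ and $\sigma_j(\cdot,0)\equiv 0$ for $j\geq 1$. Writing $R_t:=(\hat{g}^{-t})^*$ and using $\partial_t R_t=-\hat{\xi}_H R_t = -R_t \hat{\xi}_H$ (valid since the flow is one-parameter), differentiation gives
\begin{equation*}
 \partial_t V_k(t) = \hat{\Pi}_k R_t\bigl[-\hat{\xi}_H\sigma_k(t) + \partial_t\sigma_k(t)\bigr]\hat{\Pi}_k,
\end{equation*}
which must match $i\hat{H}\,V_k(t)$ on degree-$k$ functions. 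The key computation is to commute $\hat{H}$ past $R_t$ at the level of full symbols --- an Egorov-type calculation in the Toeplitz calculus on the contact manifold $(X,\alpha)$. Because $\hat{g}^t$ preserves $\alpha$ (and hence $\Sigma$), the principal symbol transports by the Hamiltonian flow of $rH$ on $\Sigma$, while subprincipal corrections arise from the failure of $R_t$ to commute with $\hat{\Pi}_k$.

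Matching the two sides order by order in $k^{-1}$ produces a hierarchy of linear first-order ODEs in $t$ for each $\sigma_j$ along Hamiltonian trajectories on $X$, with source terms at level $j$ depending only on $\sigma_0,\dots,\sigma_{j-1}$. Integrate each ODE in $t$ against the prescribed initial data, then Borel sum the series to a genuine semi-classical symbol $\sigma_k(t)$. The resulting $V_k(t)$ satisfies the propagator equation modulo $O(k^{-\infty})$ with $V_k(0)=\hat{\Pi}_k$, and uniqueness then gives \eqref{TREP}.

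The principal technical obstacle is the explicit computation of the commutator $[\hat{\Pi}_k, R_t]$ within the Toeplitz symbol calculus. Because $\hat{\xi}_H$ is not CR-holomorphic (equivalently, the Hamiltonian flow is not biholomorphic), $R_t$ does not preserve $\hcal(X)$, and its nontrivial interaction with $\hat{\Pi}_k$ is precisely what produces every sub-leading contribution to $\sigma_k(t)$. Extracting these contributions requires repeated application of the Boutet de Monvel--Sj\"ostrand parametrix \eqref{SZEGOPIintroa} to the oscillatory integrals defining the kernel of $V_k(t)$, followed by stationary phase in the $\sigma$-variable of the parametrix; the calculation is laborious but is standard within the Toeplitz FIO calculus of \cite{BG81, Z97}.
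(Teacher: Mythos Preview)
Your proposal is correct and leads to the same conclusion, but the organization differs from the paper's argument. The paper proceeds structurally: it invokes the theory of Toeplitz Fourier integral operators with complex phase \cite{BG81,MS} to assert that both $\hat U_k(t)$ and $\hat\Pi_k(\hat g^{-t})^*\sigma_k(t)\hat\Pi_k$ are semi-classical Toeplitz FIOs associated to the \emph{same} canonical relation (the graph of the lifted Hamiltonian flow on $\Sigma$), quotes \cite{Z97} for the principal symbol of $\hat U_k(t)$, and then iteratively adjusts $\sigma_k(t)$ so that the complete symbols agree to all orders. By contrast, you construct the right-hand side directly as a parametrix for the propagator initial-value problem, deriving transport ODEs for the $\sigma_j$ along the flow and then invoking Duhamel uniqueness. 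Your route is closer in spirit to the classical hyperbolic parametrix construction and in principle yields the transport hierarchy explicitly; the paper's route is shorter because it offloads the analytic work to the existing symbol calculus for Toeplitz FIOs. Both arguments ultimately rest on the same machinery---in particular on understanding how $\hat\Pi_k$ interacts with the non-CR pullback $(\hat g^{-t})^*$, which is exactly the content of \cite{Z97} that you cite at the end---so the difference is one of packaging rather than substance.
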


\begin{proof}  It follows from the theory of Toeplitz Fourier integral
operators and Fourier integral operators with complex phase \cite{BG81,MS}  that
\eqref{UkDEFb} is a unitary group of semi-classical Fourier integral operators
with complex phase associated to the graph of the lifted Hamiltonian flow
of $H$ to $X$.  On the other hand, the operator on the right hand side
of \eqref{TREP} is manifestly such a Fourier integral operator. To prove
that they are equal modulo smoothing operators of order $k^{-\infty}$ it suffices to construct $\sigma_{k}(t)$ so that they have the same complete symbol.

In \cite{Z97} the principal symbol $\sigma^{prin}_{k}(t)$ of \eqref{UkDEFb} was calculated, and by using
this as the principal symbol of \eqref{TREP} one has that
$$\h U_k(t)  - \hPi_k   (\hat{g}^{-t})^*  \sigma^{prin}_{k}(t)  \hPi_k \in I^{-1}_{sc}(\R \times M \times M, \ccal \circ \hat{\Gamma}
\circ \ccal). $$
By the same method as in the homogeneous case, one  may then construct the Toeplitz symbol  $\sigma_{k}(t)$ to all order so that the complete symbols of $\h U_k(t)$ and
\eqref{TREP} agree to all orders in $k$.
\end{proof}

It follows from the above proposition and the   Boutet de Monvel--Sj\"ostand parametrix  construction that
$\h U_k(t, x, x)$ admits an oscillatory integral representation of the form,
\bee
&& \hat{U}_k (t, x, x)  \notag \\
&\simeq&  \kk^{2m}\int_X \int_0^{\infty}    \int_0^{\infty} \int_{S^1}  \int_{S^1} 
e^{  \sigma_1 \h \psi(r_{\theta_1} x,  \h g^t y) + \sigma_2 \h \psi(r_{\theta_2} y, x) - i k \theta_1 -  i k \theta_2}  
  A_k(t, y, \theta_i, \sigma_i)   d \theta_1 d \theta_2 d \sigma_1  d \sigma_2 d y  \label{hU}  \\
& \simeq & \kk^{2m} \int_X   
e^{  k \h \psi( x,  \h g^t y) + k\h \psi( y, x) }  A_k(t, y)    d y,  \label{hU2}  
\eee
where $A_k$ is a semi-classical symbol, and the integral for $\theta_1, \theta_2$ is to extract the Fourier component, and the integral for $y$ comes from operator composition. The asymptotic symbol $\simeq$ means that the difference of the two sides is rapidly decaying in $k$. 

\section{Short time propagator $\h U_k(t, x,x)$ and Proof of Theorem 3}
In this section, we give an explicit expression for the short time propagator $\h U_k(t)$, by applying stationary phase method to the integral \eqref{hU}. As a warm up, we  illustrate
the quantization of Hamiltonian flows in the model case of quantizations of linear  Hamiltonians function (constant flow) on Bargmann-Fock space. In effect, this is the construction of the Bargmann-Fock representation of the Heisenberg
group in terms of Toeplitz quantization. 
Quantizations of linear Hamiltonians on $\C^m$ are `phase space translations', and belong to the representation theory
of the Heisenberg group. Their relevance to our problem is that Hamilton flows on general \Kahler manifolds $(M, J, \omega)$ can be
approximated by the linear models at a tangent space $(T_z M, J_z, \omega_z)$. Quadratic Hamiltonians and 
the metaplectic representation are constructed by the Toeplitz method in \cite{Dau80}.

\subsection{Propagator with linear Hamiltonian function \label{BFmodel3}}
Let $M = \C^m$, $\omega = i \sum_j dz_j \wedge d \bar z_j$ be the Bargmann-Fock model, and $H = \Re(\alpha \cdot \bar z)$ (see \S \ref{BFmodel1} and \S \ref{BFmodel2}). 

\bp
The  kernel for the propagator $\hat{U}_k(t) = \hPi_k e^{i k t \hat{H}_k} \hPi_k$, is then given  by 
\be \h U_k (t, \h z, \h w) = \kk^{m} e^{i k (\theta_z+ \half \Re(\alpha \bar z)t - \theta_w + \Im ( (z - \frac{\alpha t}{2i}) \bar w)) - k \half |z - \frac{\alpha t}{2 i} - w|^2}. \ee
In particular, if $\h z = \h w$, we have 
\be \h U_k (t, \h z, \h z) =  \kk^{m} e^{i k H(z) t} e^{- k t^2 \frac{\|\xi_H(z)\|^2}{4}}. \ee
\ep
\bpf
We may start from the integral expression
\be \hat{U}_k(t; \hat z, \h w) =  \kk^{2m}\int_{\hat{y} \in X} e^{k [\h \psi(\hat{z} , \h g^t(\hat{y})) +  \h \psi( \hat{y}, \hat{w} )]}  d \Vol_X(\hat{y}) 
\ee
where the function   $\h \psi(\h x_1, \h x_2)$ is given by
\[ \h \psi(\h x_1, \h x_2) = i (\theta_1 - \theta_2) + z \bar w - |z|^2/2 - |w|^2/2, \quad \h x_1 = (z, \theta_1), \h x_2 = (w, \theta_2)\]
and $\h g^t(\h y)$ is given by
\[ \h g^t(\h y) = (y + \frac{\alpha t}{2 i}, \theta_y - \half H(y) t). \]
Then after a Gaussian integral of $dy$ we get the desired answer. The diagonal case is straightforward, except to note that $\|\xi_H(z)\|^2 = \omega(\xi_H(z), J \xi_H(z)) = \half |\alpha|^2$. 
\epf

\brem
Since the Hamiltonian flow for a linear Hamiltonian is just translation on $\C^m$, which is compatible with the complex structure on $\C^m$,  we have $\h H_k$ commute with $\h \Pi_k$, hence $\h U_k(t) = \h \Pi_k (\h g^{-t})^* \h \Pi_k =   (\h g^{-t})^*  \h \Pi_k$. Thus 
\[ \h U_k(t; \h z, \h w) = \h \Pi_k(\h g^{-t} \h z, \h w)  = \kk^{m} e^{k \h \psi(\h g^{-t} \h z, \h w)}, \]
which gives the desired result. 
\erem
\ss{Short time propagator $\h U_k(t, \h z, \h z )$ for $t \sim 1/\sqrt{k}$. }
 Let $(M, \omega)$ be a \Kahler manifold polarized by ample line bundle $(L,h)$, such that $\omega =  c_1(L)$. Let $X \to M$ be the dual circle bundle over $M$. We have the following result about the kernel $\h U(t)$ on the diagonal, generalizing the special case of Bargmann-Fock model. 
\bp \label{p:sqrtkU}
If $z_0 \in M$ such that $dH(z_0) \neq 0$, then for any $\tau \in \R$,
\[ \h U_k(\tau/\sqrt{k},\h z_0, \h z_0) = \kk^{m} e^{i \tau \sqrt{k} H(z_0)} e^{-\tau^2 \frac{\|\xi_H(z_0)\|^2}{4}} (1 + O(|\tau|^3k^{-1/2})), \]
where the constant in the error term is uniform as $\tau$ varies over compact subset of $\R$. 
\ep
\bpf
First, we pick \Kahler normal coordinate $z_1, \cdots, z_m$ centered at point $z_0$. That is, for a small enough $\epsilon>0$, we may find coordinate in $U=B(z_0, \epsilon)$ such that the coordinate of $z_0$ is $0 \in \C^m$, and 
\[ \omega(z) = i \sum_{j=1}^m d z_j \wedge d \bar z_j + O(|z|^2). \]
We may also choose a local reference frame $e_L$ of the line bundle in a neighborhood of $z_0$, such that the induced \Kahler potential $\varphi$ takes the form
\[ \varphi(z) = |z|^2 + O(|z|^3). \]
And the almost analytic continuation is
\[ \psi(z,w) = z \cdot \bar w + O(|z|^3 + |w|^3). \]
The Hamiltonian assume the form of 
\[ H(z) = H(0) + H_1(z) + O(|z|^2)\]
where $H_1(z)$ is a real valued linear function on $\C^m \cong \R^{2m}$. Locally the circle bundle $X$ over $M$ can be trivialized with fiber coordinate $\theta$.  

Now we are ready to evaluate the integral in \eqref{hU2}
\[ \h U_k(\tau/\sqrt{k}, \h z_0, \h z_0) = \kk^{2m}  \int_{S^1} \int_{M} e^{k[\h \psi(0, \h g^{t} \h w) + \h \psi (\h w, 0)]} A_k dw \frac{d\theta_w}{2\pi} + O(k^{-\infty}). \] 
If $|w| > \epsilon$ or $|g^t w| > \epsilon$, the integrand is fast decaying in $k$, due to the off-diagonal decay of $\Pi_k(z_1, z_2)$. Hence we may restrict the integral to $|w| < \epsilon$ and $|g^t w| < \epsilon$. 

The phase function in the integral \eqref{hU2}, when we plug in 
\[ w = u/\sqrt{k}, \quad t = \tau/\sqrt{k}, \]
 and write $\h w = (w, \theta_w)$ and $\h g^t \h w = (w(t), \theta_w(t))$, then if $|w|<\epsilon, |g^t w| < \epsilon$, we have 
\bea \h \psi(0, \h g^{t} \h w) + \h \psi (\h w, 0) &= & i (\theta_w(0) - \theta_w(t))  - |w(0)|^2/2 - |w(t)|^2/2 + O(|w|^3 + |w(t)|^3) \\
&=& k^{-1/2} [i H(0) \tau] + k^{-1}[i \half H_1(u) \tau - |u|^2/2 - |u + \xi_{H_1} \tau|^2/2] + O(k^{-3/2} (|u|^3+|\tau|^3)). 
\eea
Thus we have
\bea 
&& \h U_k(t,  \h z_0, \h z_0) \\
&=&\kk^{2m}  \int_{S^1} \int_{B(z_0, \epsilon) \cap g^{-t}(B(z_0, \epsilon))} e^{k[\h \psi(0, \h g^{t} \h w) + \h \psi (\h w, 0)]} A_k dw \frac{d\theta_w}{2\pi} + O(k^{-\infty})\\
&=& \kk^{2m}  \int_{S^1} \int_{B(z_0, \epsilon) \cap g^{-t}(B(z_0, \epsilon))} e^{k[i (\theta_w(0) - \theta_w(t))  - |w(0)|^2/2 - |w(t)|^2/2 + O(|w|^3 + |w(t)|^3)]} A_k  dw d \theta_w + O(k^{-\infty})  \\
&=& \kk^{2m}  \int_{S^1} \int_{B(z_0, \epsilon) \cap g^{-t}(B(z_0, \epsilon))} e^{k[i (\theta_w(0) - \theta_w(t))  - |w(0)|^2/2 - |w(t)|^2/2]}    dw d \theta_w (1 + O(k^{-1/2}) )\\
&=& \kk^{2m}  \int_{|u|<\epsilon k^{1/2}} e^{k^{1/2} [i H(0) \tau] +[i \half H_1(u) \tau - |u|^2/2 - |u + \xi_{H_1} \tau|^2/2]}    k^{-m} d\Vol_{\C^m}(u)  (1 + O(|\tau|^3 k^{-1/2}) )\\
&=& \kk^{2m} e^{k^{1/2} [i H(0) \tau]} \int_{u \in \C^m} e^{[i \half H_1(u) \tau - |u|^2/2 - |u + \xi_{H_1} \tau|^2/2]}   k^{-m} d\Vol_{\C^m}(u)  (1 + O(|\tau|^3k^{-1/2}) )\\
&=&\kk^{m} e^{i \tau \sqrt{k} H(z_0)} e^{-\tau^2 \frac{\|\xi_H(z_0)\|^2}{4}} (1 + O(|\tau|^3k^{-1/2})),
\eea
where the calculation in the last step is the exactly the same as in the Bargmann-Fock model. 
\epf

\ss{Short time propagator $U_k(t, z, z )$ for non-periodic trajectories.}
For $z \in M$ such that $dH(z) \neq 0$, let $T(z) \in (0, \infty]$ be the period of the flow trajectory starting at $z$, and if the trajectory is not periodic we let $T(z)=\infty$. 

\bl \label{lm:decayU}
The function $T(z)$ is uniformly bounded from below over compact subset in the complement of the fixed point set of $\xi_H$. 
\el
\bpf
Let $K \subset M$ be a compact subset, such that if $z \in K$ then $dH(z) \neq 0$. Then for small enough $\epsilon>0$, the graph of the flow $\xi_H$
\[\Phi: K \times (-\epsilon, \epsilon) \to M \times M \quad (z, t) \mapsto (z, \exp(t \xi_H)(z)) \]
is injective by the implicit function theorem. Thus $T(z) > \epsilon$ for all $z \in K$. 
\epf

\bl
Fix any $T>0$. For any $z,w \in M$ and $t \in \R$ with $|t|<T$, there exists constant $C, \beta > 0$ depending on $(T, H,  M, \omega, J)$, such that for any $\h z, \h w \in X$ lift of $z,w$, we have
\[ |\h U_k(t, \h z, \h w)| < C k^m e^{-\beta \sqrt{k} d(z, g^t w)}. \]
where $d$ is the Riemannian metric of $M$. In addition, for any $n>0$, we have
\[ |\pa_t^n \h U_k(t, \h z, \h w)| < C_n k^{m+n} e^{-\beta \sqrt{k} d(z, g^t w)}. \]
for some constants $C_n$. 
\el
\bpf
Modulo a smooth remainder term, we have
\[ \h U_k(t, \h z, \h w) = \int_{\h u \in X} \h \Pi_k(\h z,\h g^t \h u) \h \Pi_k(\h u, \h w) A_k(t, \h z, \h u, \h w) d \h u. \]
From the off diagonal decay estimate of the \szego kernel  (Theorem \ref{AGMON1}), there exists $C_1, \beta_1$, such that
\[ |\h \Pi_k(\h u, \h v)| < C_1 k^m e^{-\beta_1 \sqrt{k} d(u, v)}. \]
Thus there exists constant $C_2>0$ such that,
\[ |\h U_k(t, \h z, \h w)| < C_2 \sup_{\h u \in X} |\h \Pi_k(\h z,\h g^t \h u)| \cdot |\h \Pi_k(\h u, \h w)| < C_1C_2 k^m \sup_{ u \in M} e^{-\beta_1 \sqrt{k} (d(z, g^t u) + d(u, w))}. \]
To finish the proof, suffice to bound $d(z, g^t u) + d(u, w)$ from below by $d( z, g^t w)$.
Since $g^t$ is a diffeomorphism on $M$, there exists $1 > \epsilon>0$, such that
\[ d(g^t u, g^t v) > \epsilon d(u, v), \quad \text{for all } u, v \in M,  |t| < T,\]
then
\[  d(z, g^t u) + d(u, w) > \epsilon d(z, g^t u) + \epsilon d(g^t u, g^t w) \geq \epsilon d (z, g^t w). \]
To prove the claim for $t$-derivative, we note that
\[ \pa_t^n \h U_k(t) = (i k \h H_k)^n \h U_k(t) \]
since $\h H_k$ is an operator with bounded spectrum on $\hcal_k(X)$, we get the desired estimate. 
\epf

\ss{Proof of Theorem \ref{ELLSMOOTH}.}
\bpf
By the Fourier inversion formula,
\[ \int_\R f(x) d \mu^{z,1, \alpha}_k(x) = \sum_{j} f( k (\mu_{k,j} - H(z)) + \sqrt{k} \alpha) \hPi_{k,j}(\h z, \h z) = \int e^{- i kt  (H(z) - \alpha/\sqrt{k})} \h f(t) \h U_k(t, \h z, \h z) \frac{dt}{2\pi}. \] To complete the proof of  the Theorem, it suffices to prove

\begin{lem} \label{int-t}
Let $K$ be a compact subset of $M$ that does not contain any fixed points of $\xi_H$, and let $\epsilon>0$ be small enough such that $T(z) > \epsilon$ for all $z \in K$. Then for any $z \in K$ and any non-negative Schwarz function $f\in \scal(\R)$ such that $\int f(x) dx = 1$, i.e. $\h f(0)=1$, and $\h f(t)$ is supported in $(-\epsilon, +\epsilon)$, and for any $\alpha \in \R$, we have 
\be \label{e:epst} \int_\R \h f(t) U_k(t, z, z) e^{-i t k H(z) + i t \sqrt{k} \alpha} \frac{dt}{2\pi} = \kk^{m-1/2} e^{- \frac{\alpha^2}{\|\xi_H(z)\|^2} }\frac{\sqrt{2}}{2\pi \|\xi_H(z)\|}(1+ O(k^{-1/2})). \ee
\end{lem}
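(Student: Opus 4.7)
The plan is to evaluate the integral by the dilation $t = \tau/\sqrt{k}$ and apply the short-time scaling asymptotics of Proposition~\ref{p:sqrtkU} in the central region $|\tau|\le R_k$, with $R_k$ growing slowly (say $R_k = k^{1/10}$), while controlling the tail $|\tau|>R_k$ by the off-diagonal decay estimate. The dilation produces a factor $k^{-1/2}$ from $dt$ and converts the oscillation in the integrand to $e^{-i\tau\sqrt{k}H(z)+i\tau\alpha}$; the crucial design feature is that the factor $e^{-i\tau\sqrt{k}H(z)}$ cancels exactly against the rapidly oscillating phase $e^{i\tau\sqrt{k}H(z)}$ appearing in Proposition~\ref{p:sqrtkU}. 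Setting
\[
G_k(\tau):=\h U_k(\tau/\sqrt k,\h z,\h z)\,e^{-i\tau\sqrt{k}H(z)},
\]
the integral becomes $\frac{1}{2\pi\sqrt{k}}\int_\R \hat f(\tau/\sqrt k)\,G_k(\tau)\,e^{i\tau\alpha}\,d\tau$.

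Next, I would estimate the tail. For $|t|<\epsilon$ the trajectory does not return to $z$, so a simple Taylor expansion gives $d(z,g^t z)\ge c_0|t|\,\|\xi_H(z)\|$ for some $c_0>0$ uniform on $K$. Combining with the off-diagonal bound $|\h U_k(t,\h z,\h z)|\le C k^m e^{-\beta\sqrt k\,d(z,g^t z)}$ from the preceding lemma yields $|G_k(\tau)|\le C k^m e^{-c|\tau|}$ uniformly for $|\tau|<\epsilon\sqrt k$. Integrating over $R_k<|\tau|<\epsilon\sqrt k$ contributes $O(k^{m-1/2}e^{-cR_k})=O(k^{-\infty})$, which is swallowed by the stated $O(k^{-1/2})$ error.

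On the central region, Proposition~\ref{p:sqrtkU} gives $G_k(\tau)=\kk^m e^{-\tau^2\|\xi_H(z)\|^2/4}(1+O(|\tau|^3 k^{-1/2}))$, and smoothness of $\hat f$ at $0$ with $\hat f(0)=1$ gives $\hat f(\tau/\sqrt k)=1+O(|\tau|/\sqrt k)$. Multiplying, the leading piece reduces to the Gaussian integral
\[
\int_\R e^{-\tau^2\|\xi_H(z)\|^2/4}\,e^{i\tau\alpha}\,d\tau=\frac{2\sqrt{\pi}}{\|\xi_H(z)\|}\,e^{-\alpha^2/\|\xi_H(z)\|^2}.
\]
The combined error terms are of the form (polynomial in $|\tau|$)$\cdot k^{-1/2}\cdot$(Gaussian), whose integrals are finite, giving an $O(k^{-1/2})$ absolute correction. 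Finally, collecting the prefactor $\kk^m/(2\pi\sqrt k)$ and using $\kk^m/\sqrt k=\kk^{m-1/2}/\sqrt{2\pi}$, one checks that $\frac{2\sqrt{\pi}}{\sqrt{2\pi}\cdot 2\pi\|\xi_H(z)\|}=\frac{\sqrt{2}}{2\pi\|\xi_H(z)\|}$, matching the stated constant.

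The main obstacle is that the pointwise error $O(|\tau|^3k^{-1/2})$ in Proposition~\ref{p:sqrtkU} is only valid with constants uniform on compact $\tau$-sets, whereas after rescaling the support of $\hat f(\tau/\sqrt k)$ swells to $|\tau|<\epsilon\sqrt k$. The fix is that the super-exponential Gaussian factor $e^{-\tau^2\|\xi_H(z)\|^2/4}$ suppresses the polynomial $|\tau|^3$ growth, so that the asymptotic needs to be trusted only for $|\tau|\le R_k$ with $R_k^3/\sqrt k\to 0$, and the tail is absorbed by the off-diagonal estimate. Uniformity in $z$ over $K$ follows from the uniform lower bound on $\|\xi_H(z)\|$ away from the fixed point set of $\xi_H$, which enters both through the Gaussian width and the constant $c_0$ above.
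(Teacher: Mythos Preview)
Your proposal is correct and follows essentially the same route as the paper: rescale $t=\tau/\sqrt{k}$, split the $\tau$-integral at a slowly growing cutoff ($R_k=k^{1/10}$ for you, $k^{\delta}$ with $0<\delta<1/2$ in the paper), kill the tail via the off-diagonal decay bound on $\h U_k(t,\h z,\h z)$, and on the central window invoke Proposition~\ref{p:sqrtkU} together with $\hat f(\tau/\sqrt k)=1+O(|\tau|k^{-1/2})$ to reduce to the explicit Gaussian integral. Your constant check is correct. The only point to tighten is the justification of $d(z,g^t z)\ge c_0|t|\,\|\xi_H(z)\|$ uniformly on $K\times(-\epsilon,\epsilon)$: Taylor expansion gives this near $t=0$, but for the full range you need to combine it with the non-return hypothesis $T(z)>\epsilon$ and a compactness argument (the function $(z,t)\mapsto d(z,g^t z)/|t|$ extends continuously to $t=0$ with value $\|\xi_H(z)\|>0$ and is positive on the compact set $K\times[-\epsilon,\epsilon]$); the paper's own proof is equally terse here.
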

Fix any $0 < \delta < 1/2$, we claim that the contribution to the integral is $O(k^{-\infty})$ if $\epsilon>|t| > k^{-1/2+\delta}$.  From Lemma \ref{lm:decayU}, 
\[  U_k(t, z, z) = \h U_k(t,\h z, \h z) \leq C k^m e^{- \sqrt{k} \beta d(z, g^t z)} \leq C k^m e^{-\beta' k^{\delta}} =O(k^{-\infty}), \quad \forall |t| \in  [k^{-1/2+\delta}, \epsilon]. \]
Now, we may approximate the integral on the left side of \eqref{e:epst}, and rescale the integration variable by $t = \tau / \sqrt{k}$, 
\[ \int \h f(t) U_k(t, z, z) e^{-i t k H(z) + i t \sqrt{k} \alpha}  \frac{dt}{2\pi} = k^{-1/2} \int_{|\tau|  < k^{\delta}} \h f(\tau k^{-1/2}) U_k(\tau k^{-1/2}, z, z) e^{-i \tau \sqrt{k} H(z) + i \tau \alpha} \frac{d \tau}{2\pi}. \]
Taylor expanding $\h f(\tau k^{-1/2})  = \h f(0)(1+O(|\tau| k^{-1/2})) =1+O(|\tau| k^{-1/2})$ using $\h f(0) = 1$,  and using the expansion of $U_k(\tau k^{-1/2}, z, z)$ in Proposition \ref{p:sqrtkU}, we have
\bea \int \h f(t) U_k(t, z, z) e^{-i t kH(z) + i t \alpha \sqrt{k}} \frac{dt}{2\pi}  &=& k^{-1/2} \int_{|\tau|  < k^{\delta}} \kk^{m}  e^{-\tau^2 \frac{\|\xi_H(z)\|^2}{4} + i \tau \alpha} (1 + O(|\tau|^3 k^{-1/2})) \frac{d\tau}{2\pi} \\
&=& \kk^{m-1/2}e^{- \frac{\alpha^2}{\|\xi_H(z)\|^2} }\frac{\sqrt{2}}{2\pi \|\xi_H(z)\|}(1+ O( k^{-1/2})). 
\eea
This completes the proof of Lemma \ref{int-t} and of Theorem \ref{ELLSMOOTH} . 
\epf
%
%
%

\section{\label{muhalfsect}Smooth density of states: Proof of  Theorem \ref{RSCOR}-(1)}

We start by proving \eqref{thm-2-1}. 
As above, let  $\hat{g}^t: X \to X$ denote the lifted Hamiltonian flow
and let $F^t$ denote the gradient flow.  For any $z \in M$, we choose a lift $\h z$ in $X$, since $\h \Pi_k(\h z, \h z)$ and $\h U_k(t, \h z, \h z)$ does not depend on the choice of the lift, we will use $\h z$ and $z$ interchangeably in these cases. 
 
\bpf [Proof of  \eqref{thm-2-1} ]

Let $z_k = F^{ \frac{\beta}{\sqrt{k}}} z$. By \eqref{fofHFT} we have,
\[
I := \sum_{j = 1}^{d_k} f( \sqrt{k} (\mu_{k,j} - E)) \Pi_{k,{\mu_{k,j}}}(z_k, z_k)  =   \int_{\R} \hat{f}( t) e^{- i E \sqrt{k} t} {U}_{k}( t/\sqrt{k},  z_k, z_k) \frac{dt}{2\pi}
\]
Using Proposition \ref{p:sqrtkU}, we have expression for $U_k$, 
\[ I = \kk^m \int_{\R} \hat{f}( t) e^{- i E \sqrt{k} t}  k^m  e^{ i t \sqrt{k} H(z_k)} e^{- \frac{1}{4}  |t \xi_H(z_k)|^2} [1 +O(|t|^3k^{-\half})]  \frac{dt}{2\pi}  \]
In the exponent, using $E = H(z)$, we get
\bea 
- i E \sqrt{k} t + i t \sqrt{k} H(z_k) &=& i t \sqrt{k} (H(z_k) - H(z)) \\
&=& i t \sqrt{k} [g( \nabla H(z), \frac{\beta}{\sqrt{k}} \nabla H(z)) + O((\beta/\sqrt{k})^2) \\
&=& i t \beta \|\nabla H(z)\|^2 + O(|t| k^{-\half}) 
\eea
Furtheremore, $- \frac{1}{4}  |t \xi_H(z_k)|^2 = - \frac{1}{4}  |t \xi_H(z)|^2+ O(k^{-\half} |t|^2) $. 

Hence
\bea 
I &=&    \kk^m  \int_{\R} \hat{f}( t)  e^{ i t \beta \|\nabla H(z)\|^2 - \frac{1}{4}  |t \nabla H(z)|^2} \frac{dt}{2\pi} [1 +O(k^{-\half})] \\
&=& \kk^m  \int_{t\in \R} \int_{x \in \R} f(x) e^{-i x t}  e^{ i t \beta \|\nabla H(z)\|^2 - \frac{1}{4}  |t \nabla H(z)|^2} \frac{dx dt}{2\pi} [1 +O(k^{-\half})] \\
&=& \kk^m  \int_{x \in \R} f(x) e^{ -\left(\frac{x}{\|\nabla H\|} - \beta \| \nabla H(z) \|\right)^2} \frac{dx}{\sqrt{\pi} \| \nabla H(z) \|}  [1 +O(k^{-\half})]
\eea

 To obtain  the complete asymptotic expansion stated in Theorem \ref{RSCOR}(1), it is only necessary to  Taylor expand  $F^{ \frac{\beta}{\sqrt{k}}} z$ and  use the expansion of Proposition \ref{p:sqrtkU} for  $\h U_k(t/\sqrt{k})$  to higher order in $k^{-\half}$.  
\epf

\section{Tauberian Argument: Proof of Theorem \ref{RSCOR}-(2) and \ref{RSCOR}-(3).}

We denote by $W$ a positive function in $\scal(\R)$ with $\int W(x) dx = 1$ and $\supp \h W \subset (-\epsilon, \epsilon)$, where $\epsilon$ is small enough such that there is no closed orbit of flow $\xi_H$ with passing through $z$ with time less than $\epsilon$. Let $W_h(x) = h^{-1} W(x/h)$, $h>0$.  

For clarity of notation, we let $F_k(x) = \Pi_k(z)^{-1} \mu^{z,1/2}_k (x)$.  
\bp \label{FW}
There exists  a large enough $k_0$, such that for any $k > k_0$, and for all $x \in \R$, we have
\[ F_k * W_{k^{-1/2}} (x_0) = \int^{x_0}_{-\infty} e^{-\frac{x^2}{|\xi_H|^2}}\frac{dx}{\sqrt{\pi} |\xi_H|} + O(k^{-1/2}).   \]
\ep
\bpf
Denote the $F_k * W_{k^{-1/2}} (x_0) $   by $I_1$, and let $h = k^{-1/2}$, then
\bea
I_1 & = &\int^{x_0}_{-\infty} \int_{y \in \R} \Pi_k(z)^{-1} \sum_{j=1}^{N_k} \| s_{k,j}(z) \|^2  \delta_{ \sqrt{k} (\mu_{k,j}-H(z))}(y) W_h(x-y) dy dx \\
&=& \int^{x_0}_{-\infty}\Pi_k(z)^{-1}\sum_{j=1}^{N_k} \| s_{k,j}(z) \|^2 W_h(x - \sqrt{k} (\mu_{k,j}-H(z))) dx\\
&=& \int^{x_0}_{-\infty}\Pi_k(z)^{-1}\sum_{j=1}^{N_k} \| s_{k,j}(z) \|^2 \int_\R \h W_h (\tau) e^{i \tau [x - \sqrt{k} (\mu_{k,j}-H(z))]} \frac{d\tau dx}{2\pi}  \\
&=&\Pi_k(z)^{-1}\int^{x_0}_{-\infty}  \int_\R  e^{i \tau x} \h W (\tau/\sqrt{k}) e^{i \tau \sqrt{k} H(z)} U_k(-\tau/\sqrt{k}, z, z)  \frac{d\tau dx}{2\pi}
\eea

Since $\supp \h W \subset (-\epsilon, \epsilon)$, the integral of $t$ is limited to $| t/\sqrt{k}| < \epsilon$. We first show that one can further cut-off the $d\tau$ integral to $|\tau | < k^{\delta}$, for any $0<\delta \ll 1/2$. Let $\chi(x) \in C^\infty_c(\R)$ which is constant $1$ in a neighborhood of $x=0$. Then we claim the following 
\be \label{cut-tau}
\Pi_k(z)^{-1}\int^{x_0}_{-\infty}  \int_\R    (1-\chi(\tau / k^\delta)) \h W (\tau/\sqrt{k}) e^{i \tau \sqrt{k} H(z) + i \tau x} U_k(-\tau/\sqrt{k}, z, z)  \frac{d\tau dx}{2\pi} = O(k^{-\infty})
\ee
Indeed, we may define
\[ G_k(\tau):=(1-\chi(\tau / k^\delta)) \h W (\tau/\sqrt{k}) e^{i \tau \sqrt{k} H(z)} U_k(-\tau/\sqrt{k}, z, z),\]  and let $\h G_k(x)$ be its Fourier transformation. Then \eqref{cut-tau} can be written as 
\[ \int_{-\infty}^{x_0} \int_\R e^{i \tau x} G_k(\tau) d \tau dx /2\pi = \int_{-\infty}^{x_0} \h G_k(x) dx. \]
We note that $G_k(\tau)$ is a smooth and compactly supported function in $\tau$, hence its Fourier transform $\h G_k(x)$ is also a Schwarz function in $x$. Since when $ k^{-1/2+\delta} < |\tau|/\sqrt{k} <\epsilon$, we have $U_k(-\tau/\sqrt{k}, z, z)$ and all its derivatives in $\tau$ are bounded by $k^\gamma e^{-\beta k^{\delta}}$ for some $\beta, \gamma>0$, hence all the Schwarz function semi-norms of $G_k(\tau)$ and $\h G_k(x)$ are $O(k^{-\infty})$. Thus proving \eqref{cut-tau}. 

With the above claim, we can write
\bea
I_1 &=&\Pi_k(z)^{-1} \int^{x_0}_{-\infty}  \int_\R    \chi(\tau/k^\delta) \h W (\tau/\sqrt{k}) e^{i\tau x+ i \tau \sqrt{k} H(z)} U_k(-\tau/\sqrt{k}, z, z)  \frac{d\tau dx}{2\pi} + O(k^{-\infty})\\
&=& \int^{x_0}_{-\infty}  \int_\R    \chi(\tau/k^\delta) \h W (\tau/\sqrt{k}) e^{i\tau x}  e^{-\tau^2 \frac{\|\xi_H(z)\|^2}{4}} (1 + R_k(\tau)) \frac{d\tau dx}{2\pi} + O(k^{-\infty}) \\
\eea
where $e^{-\tau^2 \frac{\|\xi_H(z)\|^2}{4}} R_k(\tau)$ is a Schwarz function in $\tau$ with all the Schwarz norm bounded by $k^{-1/2}$. With the Gaussian suppression factor $e^{-\tau^2 \frac{\|\xi_H(z)\|^2}{4}}$, we may replace the factor $\chi(\tau/k^\delta)$ by $1$, while adding an error term of $O(k^{-\infty})$. We may also write 
\[ \h W (\tau/\sqrt{k}) = \h W(0) + O(|\tau|/\sqrt{k} )= 1 + O(|\tau|/\sqrt{k}),\]
and absorb the error term into $R_k(\tau)$ as well. Thus we have
\bea
I_1 &=&  \int^{x_0}_{-\infty}  \int_\R  e^{i\tau x} e^{-\tau^2 \frac{\|\xi_H(z)\|^2}{4}} (1 + R_k(\tau)) \frac{d\tau dx}{2\pi} + O(k^{-\infty}) \\
&=&  \int^{x_0}_{-\infty} e^{-\frac{x^2}{|\xi_H|^2}}\frac{dx}{\sqrt{\pi} |\xi_H|} + O(k^{-1/2}).
\eea
\epf
 
This shows the smoothed measure of $d \mu^{z,1/2}_k$ has the desired cumulative distribution function. To prove the sharp result, we need a Tauberian argument.

\begin{lem}\label{TAUBERLEM} Let $\sigma_h = d F_h$ be a family
of finite measures,  where  $F_h: \R \to \R$ is a family of non-degreasing functions with $h \in [0,1)$ and satisfying
\begin{enumerate}

\item $\rm{supp} \sigma_h (x)  \subset [- h^{-1}, h^{-1} ]$;\bigskip

\item $F_h(x) = \sigma_h[-\infty, x]$; \bigskip

\item There exists a non-negative integer $n$ so that $F_h(x) \leq h^{-n}$ uniformly in $x$ as $h \to 0$.\bigskip

\item $\frac{d}{dx} F_h * W_h (x) = O(h^{-n})$ uniformly in $x$; 

\bigskip

\end{enumerate}

Then
$$F_h(x) = F_h * W_h (x) + O(h^{-n + 1}), \;\; h \to 0. $$

\end{lem}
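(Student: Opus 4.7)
\medskip
\noindent\textbf{Proof plan for Lemma \ref{TAUBERLEM}.}  Set $G_h := F_h * W_h$, so hypothesis (4) says $G_h$ is Lipschitz with constant $O(h^{-n})$. The plan is to transfer this Lipschitz bound on $G_h$ into a modulus-of-continuity estimate for $F_h$ itself, namely the Tauberian inequality
\[
|F_h(y+s) - F_h(y)| \;\leq\; C(|s|+h) h^{-n}, \qquad s \in \R,
\]
and then conclude by estimating $F_h - G_h$ directly against $W_h$.

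Step 1 (key inequality).  Because $W$ is a positive Schwartz function, there exist constants $c_0, C_0 > 0$ with $W(u) \geq c_0$ on $|u| \leq C_0$, so $W_h(y) \geq c_0 h^{-1}$ on $|y| \leq C_0 h$.  For $t \geq 2C_0 h$, I would write
\[
G_h(x+t) - G_h(x) \;=\; \int \bigl[F_h(x+t-y) - F_h(x-y)\bigr] W_h(y)\,dy,
\]
throw away the contribution from $|y| > C_0 h$ (the integrand is $\geq 0$ by monotonicity of $F_h$), and, after substituting $u = x-y$ in the remaining integral, use monotonicity of $F_h$ again to bound the inner integral below by $2C_0 h \cdot [F_h(x+t-C_0 h) - F_h(x+C_0 h)]$.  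Combined with $G_h(x+t) - G_h(x) = O(t h^{-n})$ from hypothesis (4), this yields
\[
F_h(x + t - C_0 h) - F_h(x + C_0 h) \;\leq\; \frac{1}{2 c_0 C_0}\, O(t h^{-n}),
\]
which is exactly the Tauberian inequality above for $s \geq 2C_0 h$; the remaining range $|s| < 2 C_0 h$ is handled by the same argument applied symmetrically (or absorbed into the constant via the $+h$ term).

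Step 2 (conclusion).  With the Tauberian inequality in hand, estimate
\[
|F_h(x) - G_h(x)| \;\leq\; \int |F_h(x) - F_h(x-y)|\, W_h(y)\,dy \;\leq\; C h^{-n} \int (|y|+h) W_h(y)\,dy.
\]
Changing variable $u = y/h$, the last integral equals $h \int (|u|+1) W(u)\,du$, which is finite because $W$ is Schwartz.  Thus $|F_h - G_h| = O(h^{1-n})$, as claimed.

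The main obstacle is Step 1: the passage from a Lipschitz bound on $G_h$ to a modulus-of-continuity bound on the unsmoothed $F_h$.  Monotonicity of $F_h$ alone is not enough; what makes this work is the strict positivity of $W$ on a neighborhood of zero (giving the lower bound $W_h \geq c_0 h^{-1}\mathbf{1}_{[-C_0 h, C_0 h]}$), which lets one discard the rest of the convolution integral without loss.  Once the Tauberian inequality is secured, Step 2 is essentially book-keeping against the Schwartz decay of $W$.
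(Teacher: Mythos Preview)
Your proposal is correct and follows essentially the same route as the paper's proof: both reduce to the increment estimate $|F_h(\tau+s)-F_h(\tau)|\le C(|s|+h)h^{-n}$ and then integrate against $W_h$ to conclude. The only organizational difference is that the paper first proves the increment bound for $|s|\le \delta_0 h$ (using $W\ge c_0$ near $0$ exactly as you do) and then telescopes to reach larger $s$, whereas your Step~1 handles all $s\ge 0$ in a single stroke by bounding $G_h(x+t)-G_h(x)$ from below by $2c_0C_0\,[F_h(x+t-C_0h)-F_h(x+C_0h)]$; this is a slightly cleaner packaging of the same idea.
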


The Lemma is almost the same as in \cite[Theorem V-13, p. 266]{R87}, except that the latter Tauberian lemma assumes that $\rm{supp} d \F_h$
is a fixed interval $[\tau_0, \tau_1]$ whereas our $d \mu_k^{z, \half}$
have supports in $C [-\sqrt{k}, \sqrt{k}]$. It turns out that the proof 
of  \cite[Theorem V-13, p. 266]{R87} extends to this situation with no change
in the proof.  For the sake of completeness, we review the proof in the Appendix \ref{TAUBERAPP}  to ensure that the extension is correct. \footnote{We thank D. Robert for corroborating that
the compact support condition is unnecessary.}

\bp
If we let $h=k^{-1/2}$  and $n=0$, and $F_h = F_{h_k} = \kk^{-m} \mu_k^{z,1/2}(-\infty, x)$. Then we have\\
(1) $\sup_{x} F_h(x) < C$ for some positive constant $C$. \\
(2) $\frac{d}{dx} F_h * W_h (x) = O(1)$ uniformly in $x$; \\
\ep
\bpf
(1) Since $F_h(x)$ is non-decreasing, and $\lim_{x \to \infty} F_h(x) = \Pi_k(z)^{-1} \Pi_k(z) = 1$, hence $F_h(x)$ is uniformly bounded by $1$.

(2) By a similar argument used in Proposition \ref{FW}, we have  
\[ \frac{d}{dx} F_h * W_h (x) = e^{-\frac{x^2}{|\xi_H|^2}}\frac{1}{\sqrt{\pi} |\xi_H|(z)} + O(k^{-1/2}), \]
hence is uniformly bounded in $x$. 
\epf

In particular, the condition in Lemma \ref{TAUBERLEM} is satisfied, and we have
\[ F_h (x_0) = F_h *W_h(x_0) + O(k^{-1/2}) =  \int^{x_0}_{-\infty} e^{-\frac{x^2}{|\xi_H|^2}}\frac{dx}{\sqrt{\pi} |\xi_H|} + O(k^{-1/2}). \]

\subsection{Proof of  Theorem 2-(3)}

In the statement of Theorem 2-(2), the sequence of points $(z_k, E_k) =(z, H(z) + \alpha/\sqrt{k})$ approaches $(z,H(z))$ while keeping $z_k$ fixed.  The following proposition would relax the direction of approaching. 
\bp
Let $(L, h, M, \omega)$ and $H, E$ be as Theorem \ref{RSCOR}. Pick any $z \in H^{-1}(E)$. If there is a sequence of $(z_k, E_k) \in M \times \R$ and $\alpha \in \R$, such that $|z_k - z| = O(k^{-1/2})$ and $|E_k - E| = O(k^{-1/2})$, and $|\sqrt{k}(E_k - H(z_k)) - \alpha| = O(k^{-1/2})$, then 
\be \Pi_{k, E_k}(z_k) = \kk^m \Erf\left( \frac{  \sqrt{2} \alpha}{|\nabla H|(z)} \right) + O(k^{m-1/2}). \ee
\ep
 
\bpf
By Theorem 2-(2),  the constant in \eqref{eqmuhalf} is uniform for $z$ for a compact neighborhood $K$ (not containing the critical point of $H$) and $|\alpha| < T$. Thus,  we have for $k$ large enough, $z_k \in K$, and 
\[ \Pi_{k, H(z_k) + \alpha/\sqrt{k}}(z_k) = \kk^m \Erf\left( \frac {\sqrt{2} \alpha}{|\nabla H|(z_k)} \right) + O(k^{m-1/2}). \]
Let $\alpha_k = \sqrt{k} (E_k - H(z_k))$, then by hypothesis $|\alpha - \alpha_k| = O(k^{-1/2})$, then we have
\[  \Pi_{k, E_k}(z_k) = \Pi_{k, H(z_k) + \alpha_k/\sqrt{k} }(z_k) = \kk^m \Erf\left( \frac{\sqrt{2} \alpha_k}{|\nabla H|(z_k)} \right) + O(k^{m-1/2}) .  \]
\epf

Given the above Proposition, we may take the sequence $(z_k, E_k) = (F^{\beta / \sqrt{k}} (z), H(z))$,  and $\alpha = -\beta |\nabla H(z)|^2$, then verify that
$$ \sqrt{k}(E_k - H(z_k)) - \alpha =  -\sqrt{k}(H(z) + \la dH, \nabla H \ra (\beta/\sqrt{k}) + O(k^{-1}) - H(z)) + \beta | \nabla H(z) |^2 = O(k^{-1/2}) $$
Thus
\[\Pi_{k, H(z)}(F^{\beta / \sqrt{k}} (z)) = \kk^m \Erf\left( -\sqrt{2}\beta |\nabla H|(z)  \right) + O(k^{m-1/2}). \]
this proves \eqref{REMEST}. 
\appendix

\section{Off-diagonal decay estimates}

\begin{theo} (See Theorem 2 of \cite{Del} and Proposition 9 of \cite{L})] \label{AGMON1}  Let $M$ be a compact
\kahler manifold, and let $(L, h) \to M$ be a positive Hermitian line
bundle.  Then the exists a constant $\beta=\beta(M,L,h)>0$ such that
$$|\tilde\Pi_N(x, y)|_{\tilde h^N} \leq  CN^{m} e^{-\beta \sqrt{N} {d} (x, y)}.  $$
where ${d}(x,y)$ is the Riemannian distance with respect to
the \kahler metric $\tilde{\omega}$.\end{theo} 

The theorem is stated for strictly pseudo-convex domains in $\C^n$ but applies with no essential change to unit codisc bundles of positive Hermitian line bundles.

\section{\label{TAUBERAPP}Tauberian theory}

In this section, we  review  the semi-classical Tauberian theorem 
of Robert  \cite{R87}.

Let $\theta \in C_0^{\infty}(-1,1) $ satisfy $\theta(0) = 1, $ and $\theta(-x) = \theta(x)$. We may also
assume $\hat{\theta} \geq 0$ and $|\hat{\theta}(x)| \geq r_0$ for $|x| \leq \delta_0$.
Let $$W_h(x) = (2 \pi h)^{-1} \hat{\theta} (- \frac{x}{h}), $$

\begin{theo}\label{TAUBER} Let $\sigma_h: \R \to \R$ be a family of non-degreasing functions with $h \in [0,1)$ and satisfying
\begin{enumerate}
\item $\sigma_h (x) = 0$ for $x \leq x_0$;\bigskip

\item There exists $x_1 > x_0$ so that  $\sigma_h$ is constant on $[x_1, \infty]$;
\bigskip

\item There exists a positive integer $n \geq 1, $ so that  $\sigma_h(x) \leq h^{-n}$ uniformly in $x$ as $h \to 0$.\bigskip

\item $\frac{d}{dx} \sigma_h * W_h (x) = O(h^{-n})$ uniformly in $x$; 

\bigskip

\end{enumerate}

Then
$$\sigma_h(x) = \sigma_h * W_h (x) + O(h^{-n + 1}), \;\; h \to 0. $$

\end{theo}

\begin{proof}

One has
$$\begin{array}{lll} \sigma_h(\tau) - \sigma_h * W_a(\tau) &=& \int_{\R} (\sigma_h(\tau) - \sigma_h(\tau - \mu)) W_a(\mu)
d\mu \\&&\\
& = & \int_{\R} (\sigma_h(\tau) - \sigma_h(\tau - a \nu )) \hat{\theta}(-\nu)d \nu. 
\end{array}. $$

Due to the fact that $\hat{\theta} \in \scal(\R)$ and assumptions (1)-(2), the Theorem reduces to the following estimate on the $a$-scale
increments.

\begin{lem}\label{ROBERTLEM} There exists   $ \Gamma >0$: 
$$|\sigma_h(\tau) - \sigma_h(\tau + h\nu)| \leq \; \Gamma  (|\nu| +1) \;h^{-n +1}, $$
for all $\tau, \nu \in \R$.
\end{lem}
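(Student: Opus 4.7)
The plan is to exploit the monotonicity of $\sigma_h$ together with the pointwise positivity of $W_h$ (which follows from $\hat\theta \geq 0$) and the lower bound $\hat\theta \geq r_0$ on $[-\delta_0,\delta_0]$, to compare the small-scale increment of $\sigma_h$ itself to the derivative of the smoothed function $\sigma_h * W_h$, which is already controlled by hypothesis (4) of Theorem \ref{TAUBER}.

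First I would establish the $h$-scale estimate
\[
\sigma_h(x + \delta_0 h) - \sigma_h(x - \delta_0 h) \leq C\, h^{-n+1},
\]
uniformly in $x$. The mechanism is the identity
\[
\frac{d}{dx}(\sigma_h * W_h)(x) \;=\; \int_{\R} W_h(x-t)\, d\sigma_h(t),
\]
which makes sense because $\sigma_h$ is non-decreasing (so $d\sigma_h$ is a positive Radon measure) and $W_h$ is Schwartz. Since $W_h(y) \geq r_0/(2\pi h)$ for $|y|\leq \delta_0 h$ and $W_h \geq 0$ everywhere, restricting the domain of integration to $|x-t|\leq \delta_0 h$ and using positivity of $d\sigma_h$ gives
\[
\frac{d}{dx}(\sigma_h * W_h)(x) \;\geq\; \frac{r_0}{2\pi h}\bigl[\sigma_h(x + \delta_0 h) - \sigma_h(x - \delta_0 h)\bigr].
\]
Combined with hypothesis (4), $(\sigma_h * W_h)'(x) = O(h^{-n})$, this yields the claim with a constant $C$ depending only on $r_0$ and $\delta_0$.

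For a general $\nu \in \R$, I would then partition the closed interval joining $\tau$ to $\tau + h\nu$ into at most $N := \lceil |\nu|/(2\delta_0)\rceil + 1$ consecutive sub-intervals each of length at most $2\delta_0 h$. Because $\sigma_h$ is monotone, $|\sigma_h(\tau)-\sigma_h(\tau+h\nu)|$ is the telescoping sum of the (non-negative) increments across these sub-intervals. Applying the previous step to each piece gives
\[
|\sigma_h(\tau)-\sigma_h(\tau+h\nu)| \;\leq\; N\, C\, h^{-n+1} \;\leq\; \Gamma\,(|\nu|+1)\, h^{-n+1},
\]
for an appropriate $\Gamma = \Gamma(C,\delta_0)$, which is Lemma \ref{ROBERTLEM}.

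I expect no serious obstacle in this argument; the only delicate point is to verify that the specific form of $W_h$ coming from a non-negative $\hat\theta$ bounded below near the origin is really enough to transfer smoothed bounds to sharp ones. The compact-support hypothesis on $\sigma_h$ present in the original formulation of \cite{R87} plays no role in this step, which is why Theorem \ref{TAUBER} extends to our measures $d\mu_k^{z,1/2}$ whose supports grow like $[-\sqrt{k},\sqrt{k}]$: the argument above is purely local in $\tau$ and uses only monotonicity plus the uniform bound (4).
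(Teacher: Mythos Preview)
Your argument is correct and is essentially the same as the paper's proof: both use the lower bound $\hat\theta \geq r_0$ on $[-\delta_0,\delta_0]$ together with $\hat\theta \geq 0$ to dominate the increment of $\sigma_h$ over an interval of half-width $\delta_0 h$ by $h\,(\sigma_h*W_h)'$, then apply hypothesis~(4), and finally telescope for general $\nu$. The only cosmetic difference is that the paper splits the telescoping into two sub-cases ($|\nu|=j\delta_0$ and $j\delta_0<|\nu|<(j+1)\delta_0$), whereas you handle all $\nu$ at once with $\lceil |\nu|/(2\delta_0)\rceil+1$ pieces.
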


The proof is broken up into three cases. 

\noindent{\bf Case (i)}  $|\nu|\leq \delta_0$, where $|\hat{\theta}| \geq r_0$ on $|x| \leq \delta_0$. 
\bigskip

We have,
$$|\sigma_h(\tau) - \sigma_h(\tau + h\nu)| = \int_{\tau - h |\nu|}^{\tau + h |\nu|} d \sigma_h(\mu)  \leq h \; d \sigma_h * W_h(\tau) =  \int_{\R} \hat{\theta}(\frac{ \tau -\mu}{h}) d\sigma_h(\mu). $$
The  inequality holds because  $|\mu - \tau| \leq h \delta_0$ on
the interval of integration and $|\hat{\theta}| \geq r_0$ on $|x| \leq \delta_0$. .  
The statement of the Lemma then follows from (4).

\bigskip

\noindent{\bf Case (ii)}  $|\nu| = j \delta_0, j \in \Z$ \bigskip

One has 
$$|\sigma_h(\tau) - \sigma_h(\tau + h j \delta_0)| = \sum_{k =1}^j
|\sigma_h(\tau + h k \delta_0) - \sigma_h(\tau + h (k-1) \delta_0)|.$$
Applying case (i) to each term gives
$$|\sigma_h(\tau) - \sigma_h(\tau + h j \delta_0)| \leq C |j|  h^{-n +1}.$$

\noindent{\bf Case (ii)}  $j \delta_0 < |\nu| < (j +1) \delta_0, j \in \Z$.
\bigskip

In this case,

$$|\sigma_h(\tau) - \sigma_h(\tau + h\nu)| \leq |\sigma_h(\tau) - \sigma_h(\tau + j h \delta_0)| +
|\sigma_h(\tau + j h \delta_0) - \sigma_h(\tau)|. $$
By the previous two cases,
$$|\sigma_h(\tau) - \sigma_h(\tau + h\nu)|  \leq C h h^{-n+1}(1 + |j|).$$

It follows that
$$|\sigma_h(\tau) - \sigma_h(\tau + h\nu)|  \leq a h^{-n} C \delta_0^{-1} (\delta_0 + |\nu|). $$

\end{proof}

\end{document}